\documentclass{mainresearch}

\title{Decompositions of Tetrahedra into Height Functions}

\author{%
	Connor James Castillo%
	\and%
	Jackson Bradley Smith%
}

\subjclass{52B10}
\keywords{tetrahedra, convex geometry}

\begin{document}
	
\maketitle
\abstract{In short, we call a polyhedron $P \subset \mathbb{R}^3$ a height function if there exists some face $F$ of $P$ such that $P$ lies entirely within $F$ when orthogonally projected onto the affine plane containing $F$. Because  tetrahedra come in various configurations, we propose a dihedral angle classification of tetrahedra, and prove that a tetrahedron $T$ is either a height function already, or, at most one planar bisection of $T$ is needed to produce two height function tetrahedra. Furthermore, we prove that such bisections belong to infinitely large families of bisections.}
\section{Introduction}
When 3-D printing objects with features that are either geometrically simple or complex, a bottom-up process is used to print each layer. In doing so, the printing process is executed in a way that, in theory, ensures the structural stability required for printing layers that lie higher above the \emph{build plate}. While this process is relatively simple for layers that lie directly above the first layer of the print, it becomes much more complicated for features of the object that protrude past the first layer.
\begin{figure}[h!]
	\centering
	\includegraphics[scale=.21]{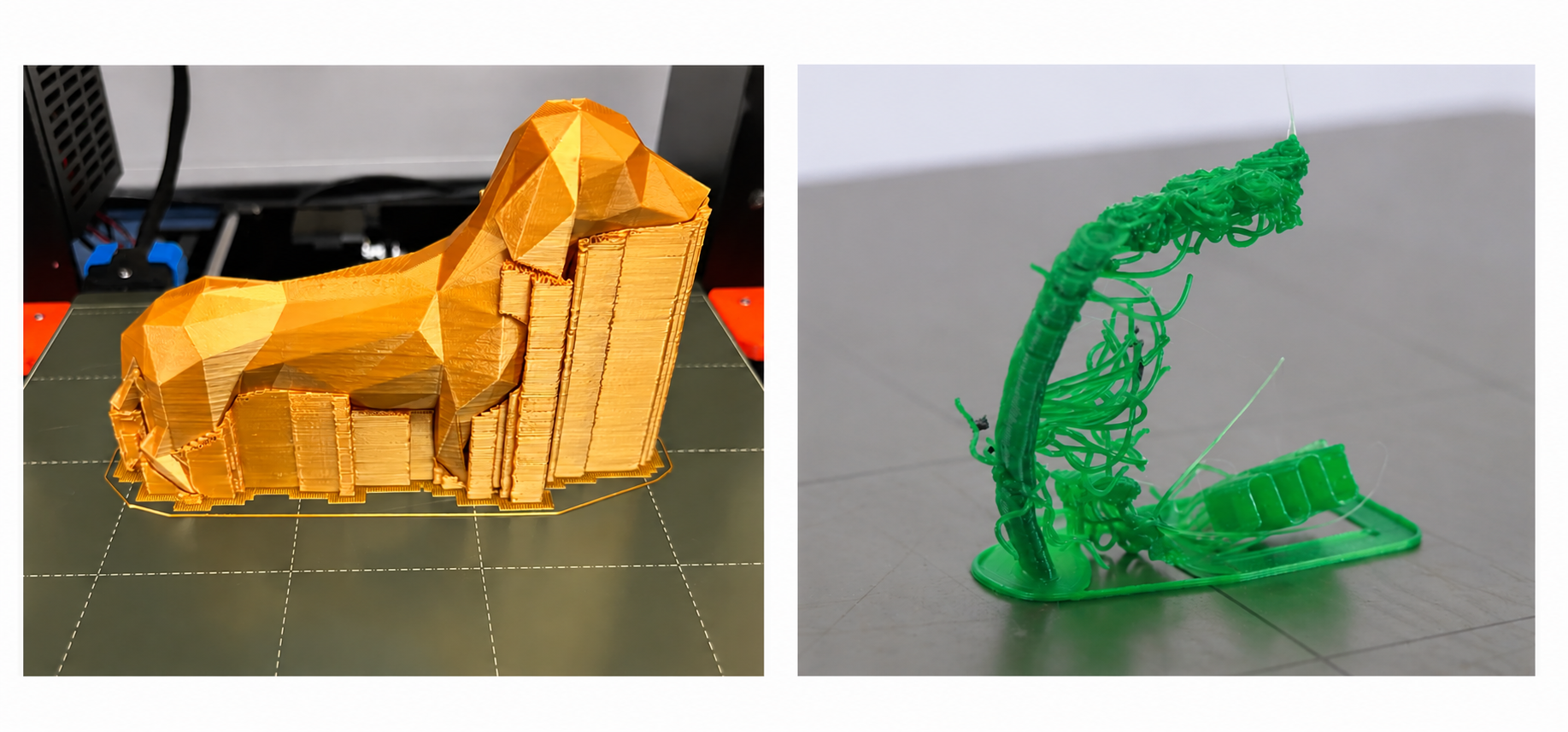}
	\caption{Example of successful support generation for a low-poly dachshund model \cite{printables_make397148} alongside an example of failed support structures during printing \cite{prusasupports}.}
	\label{fig:placeholder}
\end{figure}\label{dog}
In particular, additional \emph{supports} must be printed to ensure that the overhanging features do not collapse during the printing process. 

Though the use of supports can be, at times, effective for printing overhanging features, they open up the possibility for printing failures as seen in Figure \ref{dog}. For objects that have a significantly large amount of overhanging features due to their complexity, this issue becomes more likely. As a solution, methods of object decomposition can be employed to reduce the number of supports required to print objects with overhanging features. However, any amount of decomposition comes at the cost of increasing the total number of parts, which, in turn, may increase the total points of failure for objects being used in practical applications involving any amount of physical stress. Thus, it is within our best interest to search for decompositions that minimize both the total number of objects after decomposition and the number of supports required to print all pieces. 

In this paper, we study tetrahedra in $\mathbb{R}^3$, with particular emphasis on the relationship between their projective properties and geometric decompositions. Our primary objective is to understand when a tetrahedron admits a height function structure and how planar bisections can be used to produce such structures. By combining geometric, combinatorial, and projective techniques, we obtain a classification of tetrahedra and a complete description of the subdivisions that arise from planar cuts.

In section \ref{PRELIM}, we introduce relevant terminology with heavy emphasis on developing mathematical descriptions of supports. In particular, we define height function polyhedra and develop the geometric intuition necessary to study them in the context of tetrahedra. We also introduce planar bisections and discuss the special case of tetrahedral subdivisions, which form the principal class of decompositions considered in this work. These concepts establish the framework upon which the later classification results are built.

In section \ref{anglefunction}, we introduce the concept of a parametrized family of tetrahedral subdivisions on a fixed tetrahedron, and discuss the application of discrete angle functions on both fixed tetrahedral subdivisions and parametrized families of subdivisions.

In section \ref{tetrahedra}, we develop a classification of tetrahedra based on the number and arrangement of their obtuse dihedral angles. This classification significantly reduces the complexity of the problem by organizing all tetrahedra into a finite collection of geometric types. We then establish a height function criterion relating the dihedral angle structure of a tetrahedron to its projective properties.

In section \ref{BISEC}, we prove that every tetrahedron that is not already a height function can be transformed into one through at most a single tetrahedral subdivision. Furthermore, we show that any such subdivision is unique in type, yielding an almost complete description of affine planes that induce height function tetrahedral subdivisions of tetrahedra. 

Finally, in section \ref{SEC5}  , we unify the results of sections 4 and 5, and classify all height function tetrahedral subdivisions in terms of the families of affine planes that induce them. We will show that these families are infinite in size and unique up to isomorphisms.

\section{Polyhedra, Height Functions, and Planar Bisections}\label{PRELIM}
\subsection{Polyhedra and Height Functions}
Generally speaking, representations of polyhedra appear in a wide range of mathematical contexts, including but not limited to combinatorics, simplicial geometry, and of course, convex geometry. In the convex setting, we formally define a polyhedron $P$ as a convex set in $\mathbb{R}^n$ obtained as the convex hull of finitely many points with finitely many rays. Within this paper, we specifically focus on tetrahedra, that is, polyhedra formed by four affinely independent points in $\mathbb{R}^3$. More concretely, for the entirety of this paper, unless otherwise specified, let us define any tetrahedron $T$ 
as
\begin{equation*}
	T = \operatorname{conv} \{v_0,v_1,v_2,v_3\}, \quad v_i \in \mathbb{R}^3
\end{equation*}
For a tetrahedron $T$, we denote $V(T),E(T)$, and $F(T)$ as the sets of vertices, edges, and faces of $T$ respectively.

Given that we aim to analyze the concept of 3-Dimensional supports in a mathematical setting relating to $T$, we would naturally like to focus on what necessarily determines a feature to be "overhanging" in $\mathbb{R}^3$ relative to some face $F$ of $T$. Conveniently, we may use the orthogonal projection onto the affine plane containing the face $F$ to formalize this notion. Intuitively, a point of $T$ may be regarded as non-overhanging relative to a face $F$ whenever its orthogonal projection onto the affine plane containing $F$ lies within $F$ itself. This motivates the following definition, which captures the idea that every point of the tetrahedron is vertically supported by the face $F$.

\begin{figure}[h!]
	\centering
	\includegraphics[scale=.70]{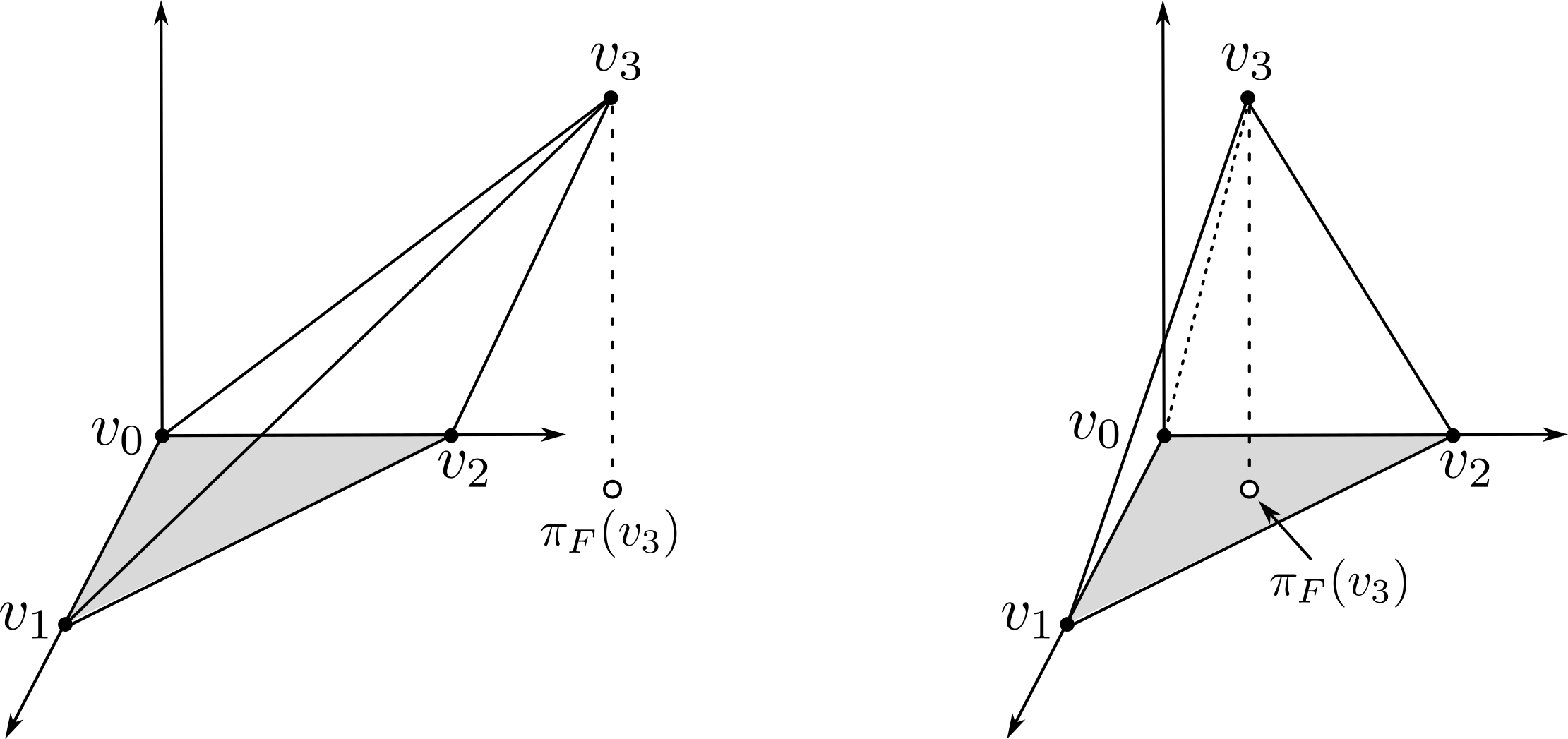}
	\caption{(\textit{Left}) A non height function tetrahedron $T \subset \mathbb{R}^3$. (\textit{Right}) A height function tetrahedron $T \subset \mathbb{R}^3$ with a foundation $F$, highlighted in gray.} 
	\label{HEIGHT}
\end{figure}
\begin{defin}
	A polyhedron $P$ is called a \emph{height function} if there exists a face 
	$F \in F(P)$ such that the orthogonal projection 
	$\pi_F \colon \mathbb{R}^3 \to \mathrm{aff}(F)$ satisfies
	\[
	\pi_F(P) \subseteq F.
	\]
	In which case, we call the face $F$ a \emph{foundation} of $P$.
\end{defin}

As we will discuss later in section \ref{BISEC}, it may be the case that a tetrahedron is not naturally a height function over any of its faces. In that case, we aim to decompose $T$ in the interest of producing sub-tetrahedra that are height functions over their respective foundation faces.

\subsection{Planar Bisections}
While there exists a wide variety of decomposition methods, we opt for a simple yet effective method that complies with the geometric constraints imposed by the convexity of $T$.
\begin{defin}\label{PBISEC}
	A planar bisection of a polyhedron $P \subset \mathbb{R}^3$ is a decomposition
	\begin{equation}\label{E1}
		P = P_1 \cup P_2
	\end{equation}
	such that there exists an affine plane $\Pi \subset \mathbb{R}^3$ satisfying
	\begin{equation}\label{E2}
		P_1 \cap P_2 = P \cap \Pi
	\end{equation}  
	where $P_1$ and $P_2$ are convex polyhedra contained in the two closed half-spaces determined by $\Pi$.
\end{defin}
\begin{remark}
	In the proof for Lemma \ref{TS}, we will explicitly use half-spaces. However, for future proofs, we will directly define the decomposition.
\end{remark}
Intuitively, we may view the affine plane $\Pi$ as a bisecting mechanism whose intersection with $P$ defines a common face to be shared by the resulting polyhedra $P_1$ and $P_2$. Note, however, that the decomposition described in Definition \ref{PBISEC} imposes no restrictions on the combinatorial or geometric type of the resulting polyhedra $P_1$ and $P_2$. In particular, if the affine plane $\Pi$ coincides with a face $F$ of $P$, then the induced decomposition fails to produce two genuine 3-dimensional sub-polyhedra of $P$. When this occurs, one sub-polyhedron of $P$ is identified with $P$ itself, while the other collapses to a 2-dimensional polygon corresponding to a face of $P$. For the purpose of this study, we regard any such planar bisection as \textit{degenerate}, and refer to the resulting collapse of $P_i$ as a \textit{degeneration} of the polyhedron.
\begin{remark}
	Strictly speaking, degenerate bisections do not satisfy the requirements of a planar bisection. We nevertheless refer to them as such when the meaning is clear from context.
\end{remark}

The existence of degenerate bisections can be found in all types of polyhedra, including tetrahedra, hinting at an important dichotomy present in the family of all affine planes $\Pi$ that induce planar bisections on polyhedra. In the context of tetrahedra, we focus on a very specific sub-family of affine planes that induce planar bisections of a more favorable type for our study.

\begin{defin}\label{2.3}
	Let $T \subset \mathbb{R}^3$ be a tetrahedron. A \textit{tetrahedral subdivision} of $T$ is a planar bisection
	\begin{equation*}
		T = T_1 \cup T_2
	\end{equation*}
	such that $T_1 \cap T_2 = T \cap \Pi$ for some affine plane $\Pi \subset \mathbb{R}^3$, where $T_1$ and $T_2$ are distinct $3$-dimensional tetrahedra.
\end{defin}

\begin{figure}[h!]
	\centering
	\includegraphics[scale=.7]{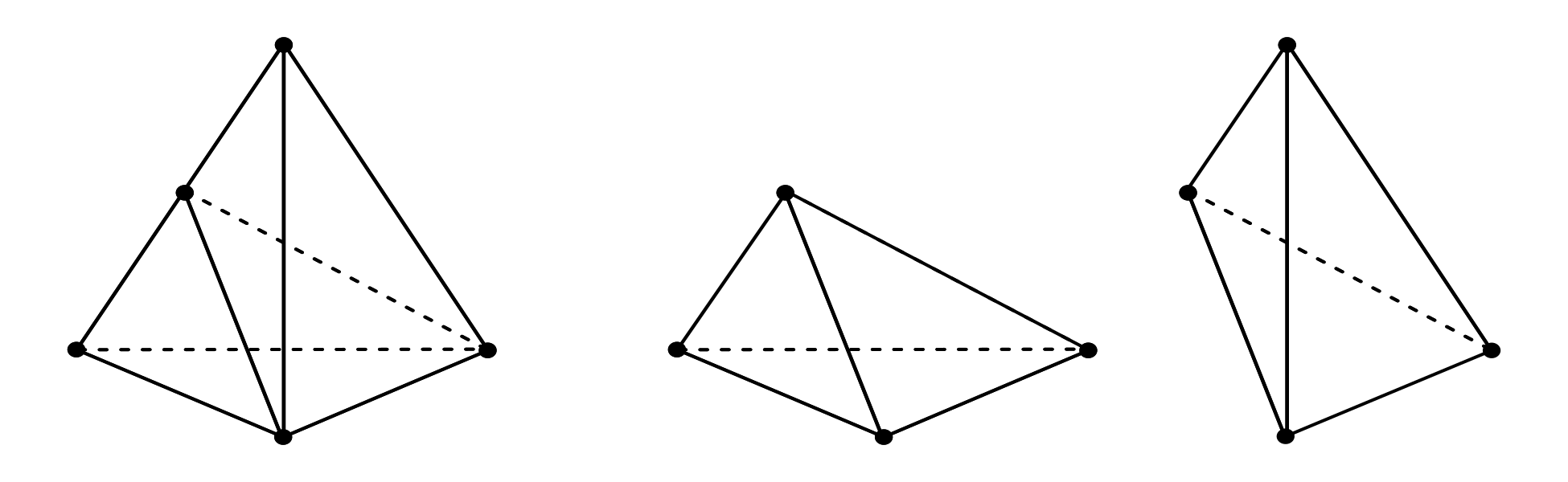}
	\caption{Tetrahedral subdivision of a tetrahedron $T$ with sub-tetrahedra separated.}
\end{figure}
\begin{figure}[h!]
	\centering
	\includegraphics[scale=.7]{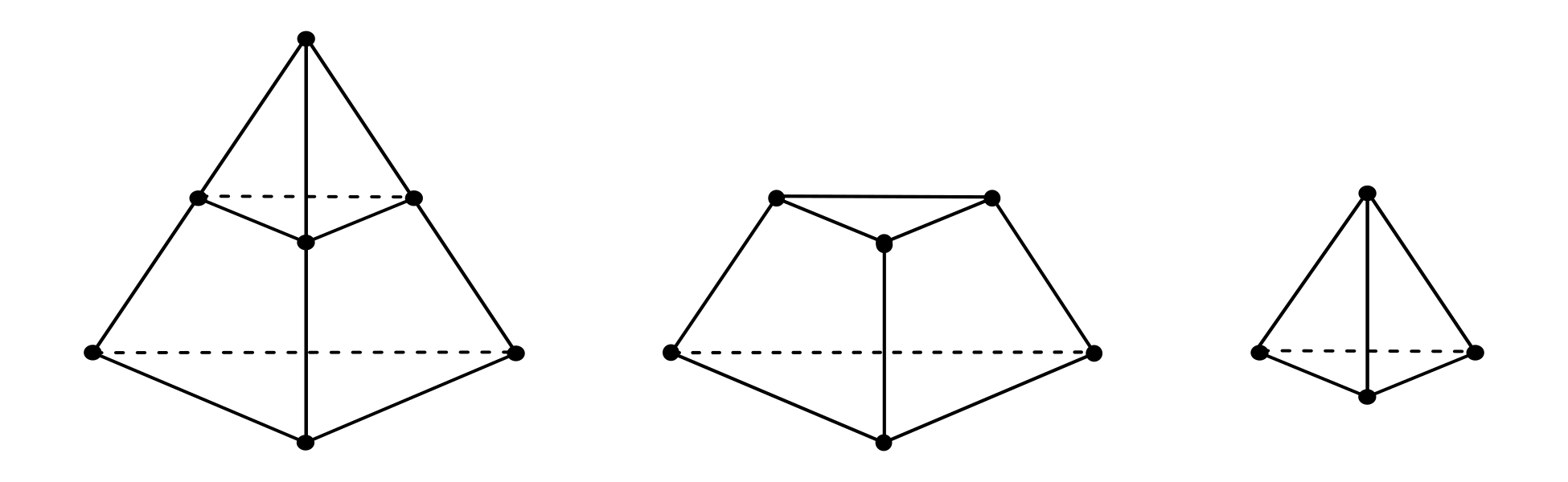}
	\caption{Non-tetrahedral subdivision planar bisection of a tetrahedron $T$ with sub-polyhedra separated.}
\end{figure}

Since the primary focus of this paper is to analyze height function tetrahedra, we adopt an informal definition of a specialized tetrahedral subdivision. We call a tetrahedral subdivision a \textit{height function tetrahedral subdivision} if the resulting sub-tetrahedra $T_1$ and $T_2$ are both height functions.

The use of Definition \ref{2.3}, similar to degenerate decompositions, directly implies that there exists affine planes whose intersection with $T$ produces a non-tetrahedral subdivision of $T$. As a conclusion to this section, let us move to formally classify the conditions for which any affine plane $\Pi$ must satisfy to induce a tetrahedral subdivision on $T$.

\begin{lemma}\label{TS}
	Let $T \subset \mathbb{R}^3$ be a tetrahedron, and let $\Pi$ be an affine plane. Then $\Pi$ induces a tetrahedral subdivision of $T$ if and only if $\Pi$ contains an edge $e \in E(T)$ and intersects the opposite edge $e' \in E(T)$ at exactly one point $p \in \operatorname{relint}(e')$.
\end{lemma}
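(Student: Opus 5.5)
I will prove both directions by using the closed half-spaces $H^+$ and $H^-$ determined by $\Pi$, and by sorting the four vertices $v_0,\dots,v_3$ according to which side of $\Pi$ they lie on. Write $T_1 = T\cap H^+$ and $T_2 = T\cap H^-$. These are the only candidates for the pieces of a planar bisection along $\Pi$. Each is the convex hull of the vertices of $T$ in that closed half-space together with the points $\Pi\cap e$ for the edges $e$ that $\Pi$ crosses transversally. The argument rests on one fact: a convex polytope that is the convex hull of finitely many points is a tetrahedron exactly when its vertex set consists of four affinely independent points.

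For the ``if'' direction, relabel so that $e=[v_0,v_1]\subset\Pi$ and $e'=[v_2,v_3]$, with $\Pi\cap e'=\{p\}$ and $p\in\operatorname{relint}(e')$. Then $v_2$ and $v_3$ lie strictly on opposite sides of $\Pi$. If $v_2$ lay on $\Pi$, then $\Pi$ would contain $v_0,v_1,v_2$, hence equal $\operatorname{aff}\{v_0,v_1,v_2\}$, and would meet $e'$ only at $v_2$, which is not in the relative interior. The only edges crossing $\Pi$ transversally are $[v_2,v_3]$ (at $p$) and the edges $[v_i,v_j]$ with $i\in\{0,1\}$, $j\in\{2,3\}$; the latter meet $\Pi$ only at $v_i$. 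Hence
\[
T_1=\operatorname{conv}\{v_0,v_1,v_2,p\},\qquad T_2=\operatorname{conv}\{v_0,v_1,v_3,p\}.
\]
Each of these is 3-dimensional. Indeed, $p\notin\operatorname{aff}\{v_0,v_1,v_2\}$, since otherwise the line through $v_2$ and $p$, which contains $v_3$, would lie in that plane and contradict affine independence. The pieces are distinct, and $T_1\cap T_2=T\cap\Pi=\operatorname{conv}\{v_0,v_1,p\}$.

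For the ``only if'' direction, I will count vertices on each side. Let $a$, $b$, $c$ be the numbers of vertices strictly above, strictly below, and on $\Pi$. Both pieces must be 3-dimensional, so $a,b\ge1$, and therefore $c\le 2$. I then check each case.

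\begin{itemize}
\item Case $c=0$. The plane meets $ab\in\{3,4\}$ edges transversally, giving $3$ or $4$ new vertices. For $(a,b)=(1,3)$ the pieces have $4$ and $6$ vertices. For $(a,b)=(2,2)$ both pieces have $6$ vertices. In either case one piece is not a tetrahedron.
\item Case $c=1$. The split is $(a,b)=(1,2)$. There are two new points, so the pieces have $1+1+2=4$ and $2+1+2=5$ vertices, and the second is not a tetrahedron.
\item Case $c=2$. The split is $(a,b)=(1,1)$. Then $\Pi$ contains the edge $e$ joining the two vertices on $\Pi$, and it crosses the opposite edge $e'$ at an interior point, which is exactly the stated condition.
\end{itemize}

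The main obstacle is making the vertex counts rigorous, that is, showing that the listed points really are vertices of the pieces, so that a piece with 5 or 6 of them is not a tetrahedron. To handle this I will argue that each listed point is extreme in $T\cap H^{\pm}$. An original vertex of $T$ is extreme in $T$, and hence extreme in any subset of $T$ that contains it. A point $\Pi\cap e$ on an edge $e$ crossing $\Pi$ transversally is extreme in $T\cap H^\pm$ because it is the endpoint of the face $e\cap H^\pm$ of that polytope. Finally, the new points are pairwise distinct, since distinct edges of a tetrahedron meet only at vertices, and those vertices are not on $\Pi$ when the crossing is transversal.
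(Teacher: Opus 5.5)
Your proof is correct, and the ``if'' direction is essentially the paper's: identify the pieces as $T\cap H^{\pm}$, read off their vertex sets $\{v_0,v_1,v_2,p\}$ and $\{v_0,v_1,v_3,p\}$, and check affine independence. Your ``only if'' direction, however, takes a different route.

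The paper argues structurally. The shared face $T\cap\Pi=\operatorname{conv}\{p_1,p_2,p_3\}$ is a triangle, and each piece is the cone over it from a single apex $v_i$ or $v_j$. Every vertex of $T$ must appear among $v_i,v_j,p_1,p_2,p_3$. This forces two of the $p_k$ to be vertices of $T$, giving $e\subset\Pi$, and the third to be a crossing point on the edge joining the two apexes.

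You instead enumerate the distributions $(a,b,c)$ of vertices relative to $\Pi$. You rule out every case except $(1,1,2)$ by exhibiting a piece with $5$ or $6$ extreme points.

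The paper's argument avoids case analysis, but it uses three facts without comment:
\begin{itemize}
\item $T\cap\Pi$ is $2$-dimensional rather than an edge or a point;
\item each apex is a vertex of $T$;
\item a vertex of $T\cap\Pi$ that is not a vertex of $T$ lies in the relative interior of an edge.
\end{itemize}
Your count needs only a lower bound on the number of extreme points, and you justify that directly: vertices of $T$ stay extreme in subsets, and crossing points are endpoints of the faces $e\cap H^{\pm}$. So your argument is more self-contained, and it also shows which non-tetrahedral pieces occur (six-vertex prisms and five-vertex pyramids).

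Two small wording fixes:
\begin{itemize}
\item The edges $[v_i,v_j]$ with $i\in\{0,1\}$, $j\in\{2,3\}$ only touch $\Pi$ at an endpoint, so they do not cross it transversally. What you mean is that $\Pi$ meets the edges of $T$ only in $v_0,v_1,p$.
\item Your claim that the pieces must be $T\cap H^{\pm}$ deserves its one-line justification. It follows from $T=T_1\cup T_2$ with $T_1\subset H^+$, $T_2\subset H^-$, and $T_1\cap T_2=T\cap\Pi$.
\end{itemize}
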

\begin{proof}
	$(\Rightarrow)$ Let $T = \operatorname{conv}\{v_0,v_1,v_2,v_3\} \subset \mathbb{R}^3$ be a tetrahedron, and assume $\Pi$ induces a tetrahedral subdivision of $T$. By definition,
	\[
	T_0=T\cap H^- \quad \text{ and } \quad T_1=T\cap H^+
	\]
	are both tetrahedra where $H^-$ and $H^+$ are the closed half-spaces determined by $\Pi$. Thus, the face they share, defined by $T \cap \Pi$, is triangular and can be described as,
	\[
	T \cap \Pi = \operatorname{conv}\{p_1,p_2,p_3\} \quad p_i\in \mathbb{R}^3
	\]
	Because $T_0$ and $T_1$ are both tetrahedra, we may express them in terms of their 4 vertices, 3 of which they have in common through the face $T \cap \Pi$:
	\[
	T_0=\operatorname{conv}\{v_i,p_1,p_2,p_3\} \quad \text{ and } \quad T_1 = \operatorname{conv}\{v_j,p_1,p_2,p_3\}
	\]
	Notice that for any non-degenerate tetrahedral subdivision of $T$, every vertex of $T$ must be a vertex in at least one of $T_0$ or $T_1$. In other words,
	\[
	\{v_0,v_1,v_2,v_3\}\subset \{v_i,v_j,p_1,p_2,p_3\}
	\]
	Moreover, $T_0$ and $T_1$ are distinct tetrahedra, meaning they cannot both contain all 4 vertices of $T$. As such, there exists 2 vertices of $T$, without loss of generality call them $v_0$ and $v_1$, such that 
	\[
	v_0\in T_0 \setminus T_1 \quad \text{and} \quad v_1\in T_1 \setminus T_0
	\]
	Since $T_0$ and $T_1$ only differ by one vertex, it follows that $v_i = v_0$ and $v_j = v_1$.
	This then implies that,
	\[
	\{v_2,v_3\}\subset \{p_1,p_2,p_3\}
	\]
	After possibly relabeling the indices we can assume $p_2 = v_2$ and $p_3 = v_3$. Therefore,
	\[
	T_0=\operatorname{conv}\{v_0,v_2,v_3,p_1\} \quad \text{ and } \quad T_1 = \operatorname{conv}\{v_1,v_2,v_3,p_1\}
	\]
	Because $v_2 = p_2 \in \Pi$ and $v_3=p_3 \in \Pi$, the segment $[v_2,v_3]=[p_2,p_3] \subset \Pi$ is contained within $\Pi$. Thus, the edge $[v_2,v_3] \in E(T)$ is contained within $\Pi$. Next, observe that $p_1$ cannot be a vertex in $T$, otherwise $\Pi$ would contain the face of $T$ given by $\operatorname{conv}\{p_1,v_2,v_3\}$ contradicting the assumption that $\Pi$ induces a non-degenerate subdivision. Hence, $p_1$ is in the interior of some edge $e'\in E(T)$. Since $p_1\in \Pi$, the endpoints of $e'$ lie in different half-spaces. However, the only edge with this property is $[v_0,v_1]$. Therefore, $p_1\in \operatorname{relint}([v_0,v_1])$. Since $[v_2,v_3]$ and $[v_0,v_1]$ are opposite edges, this proves the claim.

	$(\Leftarrow)$ Assume the affine plane $\Pi$ contains an edge $e \in E(T)$ and intersects the opposite edge $e' \in E(T)$ at exactly one point $p \in \operatorname{relint}(e')$. Without loss of generality, let
	\begin{equation*}
		e = [v_2,v_3], \quad e' = [v_0,v_1]
	\end{equation*}
	Then
	\begin{equation*}
		\Pi = \operatorname{aff}\{v_2,v_3,p\} \quad \text{ and } \quad T\cap \Pi = \operatorname{conv}\{v_2,v_3,p\}
	\end{equation*}
	Since $p \in \operatorname{relint}([v_0,v_1])$, the plane $\Pi$ separates $v_0$ and $v_1$. Thus, we may choose $H^-$ and $H^+$ to be the closed half-spaces determined by $\Pi$, such that $v_0 \in H^-$ and $v_1 \in H^+$. 
	
	Then let us define 
	\begin{equation*}
		P_0 = T \cap H^-, \quad P_1 = T \cap H^+
	\end{equation*}
	Since $P_0 = T \cap H^-$ is the intersection of a tetrahedron with a half-space, its vertices are among the vertices of $T$ lying in $H^-$, together with points of $\Pi$ that intersect edges of $T$ joining opposite half-spaces. Because $v_0,v_2,v_3 \in H^-$, $v_1, v_2,v_3 \in H^+$, and $\Pi$ intersects the edge $[v_0,v_1]$ at the point $p$, it follows that the only vertices of $P_0$ are $v_0,v_2,v_3,$ and $p$. Similarly, the only vertices of $P_1$ are $v_1,v_2,v_3,$ and $p$. Therefore,
	\begin{equation*}
		P_0 = \operatorname{conv}\{v_0,v_2,v_3,p\}, \quad P_1 = \operatorname{conv}\{v_1,v_2,v_3,p\}
	\end{equation*}
	Furthermore, $p \not \in \operatorname{aff}\{v_0,v_2,v_3\}$ and $p \notin \operatorname{aff}\{v_1,v_2,v_3\}$. Both vertex sets consist of four affinely independent  points, thus, both $P_0$ and $P_1$ are tetrahedra with 
	\begin{equation*}
		T = P_0 \cup P_1
	\end{equation*}
	Therefore, $\Pi$ induces a tetrahedral subdivision of $T$.
\end{proof}

\section{Properties of Parametrized Subdivisions and Angle Functions}\label{anglefunction}
In the previous section, we provided a complete characterization of affine planes that induce tetrahedral subdivisions. There, the intersection point $p$ was treated as a fixed object, yielding a single subdivision of $T$. In this section, we transition to a continuous framework by allowing $p$ to vary as an independent parameter. This enables us to describe a $p$-dependent family of affine planes and their corresponding tetrahedral subdivisions, along with the dihedral angle functions used to quantify the resulting geometric configurations.

\subsection{Dihedral Angle functions for fixed Subdivisions}
Prior to introducing parametrized subdivisions, we ground ourselves in the discrete setting of fixed subdivisions with a fairly well-known angle function.
\begin{defin}
	Suppose $T\subset \mathbb{R}^3$ is a tetrahedron and let $e \in E(T)$. Let $F_1,F_2 \in F(T)$ be the two faces incident to $e$, and for each face $F_k$, let $\hat{n}_k$ be the inward facing unit normal vector to $F_k$. The \textit{dihedral angle} function with respect to $T$ is defined by 
	\begin{equation}\label{FIXED}
		\alpha_T:E(T) \to [0,\pi], \quad  \alpha_T(e) := \arccos(\langle \hat{n}_1,\hat{n}_2\rangle )
	\end{equation}
\end{defin}
\begin{remark}
	The unit normal $\hat{n}_k$ may be viewed equivalently as the unit normal to the affine plane containing the face $F_k$, since each face of a tetrahedron lies in a unique affine plane. Hence, measuring the angle between faces is equivalent to measuring the angle between their supporting planes.
\end{remark}
Intuitively, $\alpha_T$ measures the dihedral angle between the two planes containing the faces of $T$ incident to an edge. This interpretation is reflected in the second argument of Equation \ref{FIXED}, which follows directly from
\begin{equation}
	\cos(\alpha_T(e)) = \langle \hat{n}_1, \hat{n}_2 \rangle,
\end{equation}
where $\langle \cdot, \cdot \rangle$ denotes the standard inner product. By construction, $\alpha_T$ is a bounded function on the discrete set $E(T)$, and is therefore not continuous in any meaningful topological sense (we assume $\mathbb{R}^3$ to be equipped with the standard Euclidean metric). In the context of general tetrahedra, this function may seem rather mundane, but relative to tetrahedral subdivisions, they become much more meaningful. Indeed, if $\Pi$ is a fixed affine plane that induces a non-degenerate tetrahedral subdivision $T= T_0 \cup T_1$, then $\alpha_T$ may be decomposed into two separate functions:
\begin{equation}\label{A12}
	\alpha_{T_0}:E(T_0) \to [0,\pi], \quad \alpha_{T_1}:E(T_1) \to [0,\pi]
\end{equation}
These functions are sub-tetrahedra-specific dihedral angle functions defined on their respective edge sets. The behavior of these functions for distinct edges of $T_0$ and $T_1$ not contained in $E(T_0)\cap E(T_1)$ are fairly trivial and analogous to $\alpha_T$. The more interesting behavior occurs on the shared face $T \cap \Pi$. Let 
\begin{equation*}
	E ( T \cap \Pi) = \{e,e_1,e_2\}
\end{equation*}
where $e \in E(T)$ is the original edge of $T$ contained in $\Pi$, and $e_1,e_2 \notin E(T)$ are the two edges introduced by the subdivision of $T$. Since $e,e_1,e_2 \in E(T_0) \cap E(T_1)$, the corresponding angle functions $\alpha_{T_0}$ and $\alpha_{T_1}$ satisfy the following algebraic relations:
\begin{enumerate}
	\item For the edge $e \in E(T)$ contained in $\Pi$,
	\begin{equation}\label{FIXEDFUNC1}
		\alpha_{T_0}(e)+\alpha_{T_1}(e) = \alpha_T(e)
	\end{equation}
	\item For the remaining edges $e_1,e_2$ formed by the subdivision,
	\begin{equation}\label{FIXEDFUNC2}
		\alpha_{T_0}(e_1)+\alpha_{T_1}(e_1) = \pi, \quad \alpha_{T_0}(e_2)+\alpha_{T_1}(e_2) = \pi 
	\end{equation}
\end{enumerate}

The first relationship described by Equation \ref{FIXEDFUNC1} is a direct result of the angle being split by the subdivision along $e$ as depicted in Figure \ref{ANGLE1}. The second relationship from Equation \ref{FIXEDFUNC2} holds because $e_1$ and $e_2$ are not original edges of $T$, but instead lie within faces $F_1, F_2 \in F(T)$, respectively. Consequently, the dihedral angle at each edge is measured with respect to the plane containing $F_1$ and $F_2$, respectively. 

\begin{figure}
	\centering
	\includegraphics[scale=.88]{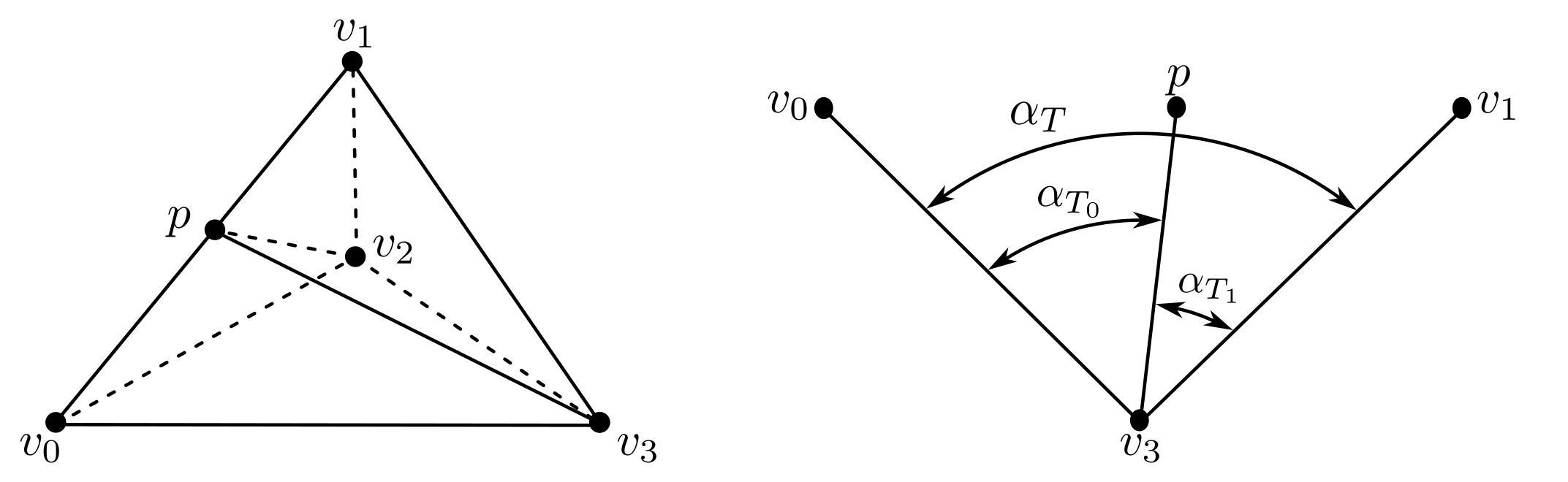}
	\caption{Relationship shared by $\alpha_{T_0}$ and $\alpha_{T_1}$ described in Equation \ref{FIXEDFUNC1}, where the dihedral angle of original edge $e_0 =[v_2,v_3]$ is split by a generic tetrahedral subdivision. (\textit{Left}) Birds-eye view of $T$ illustrating subdivision and the shared face. (\textit{Right}) 2 dimensional side view relative to the vertex $v_3$, and the associated angles measured between each face.}
	\label{ANGLE1}
\end{figure}

\subsection{Parametrized Subdivisions and Configuration Spaces}
As described in Lemma \ref{TS}, any tetrahedral subdivision of $T$ is induced by an affine plane $\Pi$ that contains an edge $e \in E(T)$ and intersects the edge $e'$ opposite to $e$ at exactly one point $p \in \operatorname{relint}(e')$. To produce a family of tetrahedral subdivisions on $T$, we allow $p$ to vary along $e'$.
\begin{defin}\label{FAMS}
	Let $e,e' \in E(T)$ be opposite edges and let $p \in e'$. Then $\left\{\Pi_p\right\}_{p\in e'}$ is a \textit{one-parameter family of affine planes} with
	\begin{equation}\label{3.6}
		\Pi_p:= \operatorname{aff}\{e \cup \{p\}\}
	\end{equation}
	that induces the associated family of subdivisions $\left\{T_0^p, T_1^p\right\}_{p \in e'}$ where $T_0^p$ and $T_1^p$ are uniquely determined by each $\Pi_p$ and satisfy
	\begin{equation}
		T = T_0^p\cup T_1^p
	\end{equation}
	for all $p \in e'$.
\end{defin}

\begin{figure}[h!]
	\centering
	\includegraphics[scale=.95]{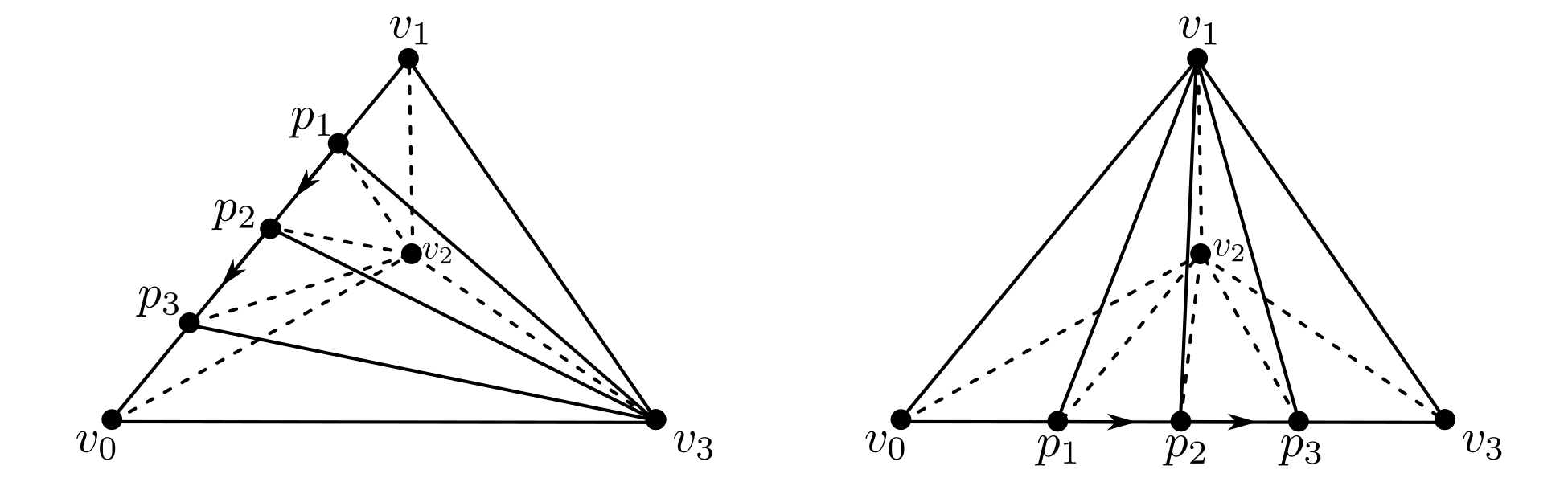}
	\caption{Two unique parametrized families of subdivisions on the same tetrahedron $T$ with freeze frames at $p_1,p_2,p_3\in [v_0,v_1]$ and $p_1,p_2,p_3 \in [v_0,v_3]$ as $p$ varies from $v_1$ to $v_0$ and $v_0$ to $v_3$ respectively.}
	\label{FAMILYSUB}
\end{figure}

By construction, each affine plane $\Pi_p \in \{\Pi_p\}_{p\in e'}$ contains the fixed edge $e$ and uniquely determines two $p-$dependent edges not belonging to $E(T)$, which we collectively denote by $e(p)$. Unlike Lemma \ref{TS}, Definition \ref{FAMS} places no interior restriction on the interval $e'$ for which $p$ varies in. As a result, the family $\{\Pi_p\}_{p \in e'}$ contains two distinct affine planes defined at the vertex endpoints of $e'$, each corresponding to a degeneration of $T_0^p$ or $T_1^p$ respectively. In each case, the nondegenerate member of the induced bisection coincides with the original tetrahedron $T$, thereby necessitating the inclusion of $T$ in the family $\{T_0^p,T_1^p\}_{p \in e'}$. Naturally, these endpoint configurations will serve as limiting objects to determine the behavior of one sided limits of functions arising from the family $\{T_0^p,T_1^p\}_{p \in e'}$ as $p$ varies over $e'$.

To begin to analyze the underlying structure of these families, we introduce a 3-dimensional configuration space that encodes all relevant geometric data (namely edges, subdivision index, and parameter position) in a single indexing space.

\begin{defin}
	Let $T$ be a tetrahedron and $e,e' \in E(T)$. If $\{\Pi_p\}_{p \in e'}$ is a one-parameter family of affine planes that induces the family of tetrahedral subdivisions $\{T_0^p,T_1^p\}_{p \in e'}$ of $T$, then the configuration space relative to $\{T_0^p,T_1^p\}_{p \in e'}$ is
	\begin{equation}
		\xi = \{(e,i,p) \mid p \in \operatorname{relint} (e'), i \in \{0,1\}, e \in E(T_i^p)\}
	\end{equation}
\end{defin}
The configuration space $\xi$ captures every possible combinatorial-geometric configuration of $T$ under a single parametrized family of subdivisions. Naturally, this configuration space decomposes nicely according to the two sub-tetrahedra $T_0^p$ and $T_1^p$. Accordingly, we define the subsets
\begin{equation}
	\xi_0 := \{(e,i,p)\in \xi \mid i=0\},
	\qquad
	\xi_1 := \{(e,i,p)\in \xi \mid i=1\}.
\end{equation}
which capture the combinatorial-geometric configurations of $T_0^p$ and $T_1^p$ individually. These subsets allow us to separate quantities defined on each sub-tetrahedron, enabling independent analysis of $T_0^p$ and $T_1^p$. It is important to note, though, that $\xi$, $\xi_0$ and $\xi_1$ lack any canonical topology, metric, or smooth structure, and thus, are all purely set-theoretic objects. Consequently, any such function defined on the domain of $\xi$ does not support an intrinsic analytical structure. In particular, since $\xi$ lacks a topological structure, notions such as continuity or monotonicity are not defined for functions on $\xi$.

\subsection{The Dihedral Angle Function on Parametrized Subdivisions}
Thus far, we have studied the dihedral angle function for fixed subdivisions and constructed a parametrized family of subdivisions on $T$ together with their associated configuration spaces. We now introduce a generalized dihedral angle function defined on this family of subdivisions.

\begin{defin}
	Let $T$ be a tetrahedron and let $\{\Pi_p\}_{p \in e'}$ be a one-parameter family of affine planes inducing a family of tetrahedral subdivisions $\{T_0^p,T_1^p\}_{p \in e'}$ of $T$. Let $\xi$ be the associated configuration space. For each $(e,i,p)\in \xi$, let $F_1$ and $F_2$ denote the two faces of $T_i^p$ incident to $e$, and let $\hat n_1(p)$ and $\hat n_2(p)$ denote their corresponding inward facing unit normals. The associated dihedral angle function is
	\begin{equation}
		\alpha : \xi \to [0,\pi],
		\qquad
		\alpha(e,i,p)
		:= \arccos\!\big(\langle \hat n_1(p),\hat n_2(p)\rangle\big).
	\end{equation}
	Define two of its restrictions as
	\begin{equation}
		\alpha_0 := \alpha|_{\xi_0},
		\qquad
		\alpha_1 := \alpha|_{\xi_1}.
	\end{equation}
	When no ambiguity can arise, we suppress the fixed index in the domain and write
	\begin{equation}
		\alpha_0(e,p) := \alpha(e,0,p),
		\qquad
		\alpha_1(e,p) := \alpha(e,1,p).
	\end{equation}
\end{defin}
Much like the configuration space $\xi$, we treat $\alpha$ as a tool to capture geometric measurements associated with configurations of the subdivision family. Although $\alpha$ may seem to represent a continuous measurement over $\{T_0^p,T_1^p\}_{p\in e'}$, at its core, it is still a discrete set-theoretic function, and remains undefined for degenerate subdivisions of $T$.
\begin{figure}[h!]
	\centering
	\includegraphics[scale=.95]{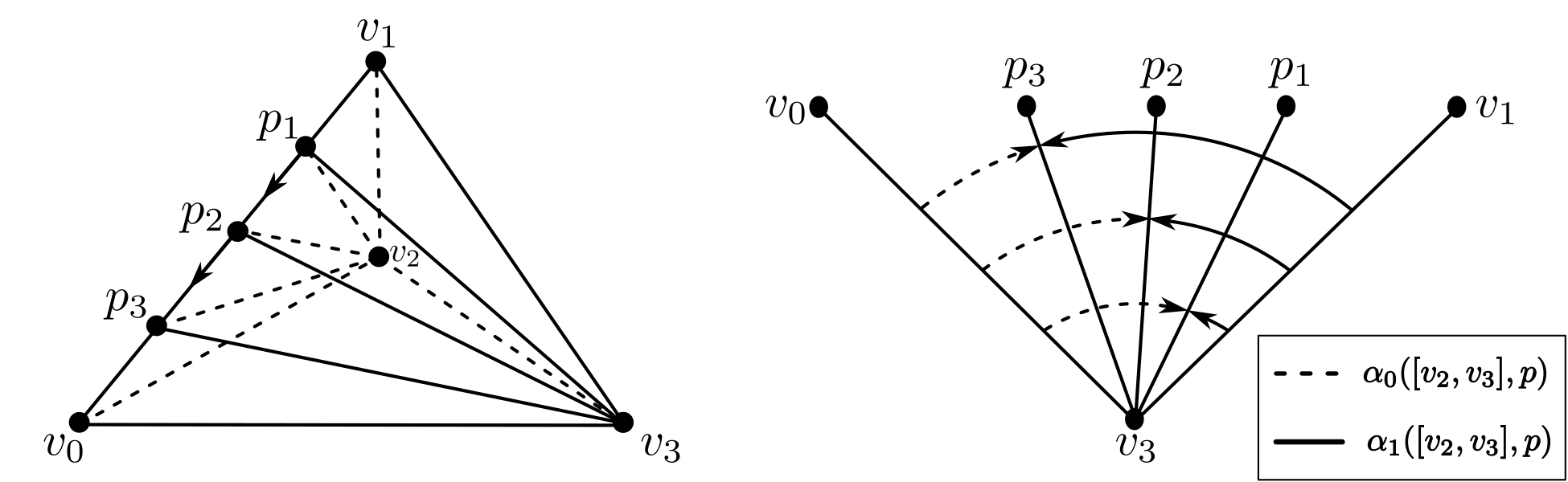}
	\caption{Single Parameter Functions \textit{(Left)} Parametrized family of subdivisions from Figure \ref{FAMILYSUB} \textit{(Right)} Angle functions for the fixed edge $[v_2,v_3]$ providing a continuous measure as $p$ varies, and satisfying Equation \ref{ALG1}}
\end{figure}

The restrictions $\alpha_0$ and $\alpha_1$ serve as counterparts to the angle functions $\alpha_{T_0}$ and $\alpha_{T_1}$ of a fixed subdivision introduced in Equation \ref{A12}. Specifically, they assign to each admissible configuration $(e,i,p)$ the dihedral angle at the edge $e$, measured in the tetrahedron $T_i^p$. As analogs to the fixed angle functions $\alpha_{T_0}$ and $\alpha_{T_1}$, these functions continuously satisfy identical algebraic relations as $p$ varies. Indeed, we find that for the fixed edge $e$ contained in each affine plane of $\{\Pi_p\}_{p \in e'}$, the restrictions of $\alpha$ satisfy
\begin{equation}\label{ALG1}
	\alpha_{0}(e,p) + \alpha_{1}(e,p) = \alpha_T(e) \quad \forall p\in \operatorname{relint}(e')
\end{equation}
and for the edges $e_1(p)$ and $e_2(p)$ that are continuously defined as $p$ varies,
\begin{equation}\label{ALG2}
	\alpha_{0}(e_1(p),p) + \alpha_{1}(e_1(p),p) = \pi, \quad 
	\alpha_{0}(e_2(p),p) + \alpha_{1}(e_2(p),p) = \pi \quad \forall p\in \operatorname{relint}(e')
\end{equation}
However, special attention must be paid to the domain on which these relations are well defined. For the fixed dihedral angle functions, we assumed that the fixed affine plane $\Pi$ induced a nondegenerate subdivision of $T$, ensuring that each function was well defined on its corresponding edge set. In contrast, any family $\{T_0^p,T_1^p\}_{p \in e'}$ has that $T_0^p$ and $T_1^p$ degenerate at opposite endpoints of (e'). Consequently, the associated relations are generally well defined only for $p \in \operatorname{relint}(e')$.

\subsection{Induced Single-Parameter Functions}

As imposed by the configuration space $\xi$, the dihedral angle function $\alpha:\xi \to [0,\pi]$ relative to $\{T_0^p,T_1^p\}_{p\in e'}$ fails to exhibit properties such as continuity and monotonicity, a critical factor in analyzing dynamic systems. To produce a practical realization of $\alpha$, we define auxiliary functions used for analysis as pullbacks along parameter maps $p \mapsto (e,i,p)$. In doing so, we avoid the need to employ an explicit topology on $\xi$. Furthermore, since each edge $e' \subset \mathbb{R}^3$ is homeomorphic to a closed interval in $\mathbb{R}$, we will often identify $p \in e'$ with a real parameter in $[0,1]$. Consequently, the $p$-dependent functions introduced below may be viewed as real-valued functions of a single real variable.

\begin{defin}\label{ANGLEFUNCS}
	Suppose $\{\Pi_p\}_{p \in e'}$ is a one-parameter family inducing the tetrahedral subdivisions $\{T_0^p,T_1^p\}_{p \in e'}$. Let $\xi$ denote the associated configuration space and let $\alpha : \xi \to [0,\pi]$ be the corresponding dihedral angle function, with restrictions $\alpha_i := \alpha|_{\xi_i}$ for $i \in \{0,1\}$. For the fixed edge $e$ contained in each $\Pi_p$, we define the single-parameter function
	\begin{equation}\label{FUNC1}
		p \longmapsto \alpha_i(e,p), \qquad i \in \{0,1\}.
	\end{equation}
	and for $e(p)$, an edge determined by $\Pi_p$, the single-parameter function
	\begin{equation}\label{FUNC2}
		p \longmapsto \alpha_i(e(p),p), \qquad i \in \{0,1\}.
	\end{equation}
\end{defin}
Prior to deriving important properties relating to these functions, we take a moment to appreciate and more explicitly analyze the underlying pullback of $\alpha$ taking place. Although the dihedral angle function $\alpha$ is defined on the configuration space $\xi$, the functions of Definition \ref{ANGLEFUNCS} are obtained by fixing a geometric quantity recorded by $\alpha$ and allowing the parameter $p$ to vary. More precisely, for the fixed edge $e$, the map $p \longmapsto \alpha_i(e,p)$ selects a one-parameter subset of $\xi$, and the corresponding angle function is obtained by the composition
\begin{equation*}
	p \longmapsto (e,i,p) \longmapsto \alpha(e,i,p)
\end{equation*}
Likewise, for the edge $e(p)$ determined by $\Pi_p$, the map $p \longmapsto \alpha_i(e(p),p)$ selects a parametrized family of configurations, yielding the composition
\begin{equation*}
	p \longmapsto (e(p),i,p) \longmapsto \alpha(e(p),i,p)
\end{equation*}
In this way, these functions measure the evolution of specific dihedral angles throughout the family of subdivisions. Consequently, all subsequent analysis is reduced to the study of real-valued functions of the single parameter $p$.

In order to use these newly defined functions to their fullest extent, we demonstrate that they exhibit two key properties.

\begin{lemma}[Continuity of $p$-Functions]\label{PROP1}
	For each $i \in \{0,1\}$, the functions
	\begin{equation*}
		p \longmapsto \alpha_i(e,p),
		\qquad
		p \longmapsto \alpha_i(e(p),p)
	\end{equation*}
	are continuous on $\operatorname{relint}(e')$.
\end{lemma}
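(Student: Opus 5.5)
We parametrize $e'$ affinely, writing $e'=[v_0,v_1]$, $e=[v_2,v_3]$ and
\[
p(t)=(1-t)v_0+tv_1, \qquad t\in(0,1),
\]
so that $p\in\operatorname{relint}(e')$ corresponds to $t\in(0,1)$. The map $t\mapsto p(t)$ is affine, hence continuous. By Lemma \ref{TS}, for every such $t$ we have explicit vertex sets
\[
T_0^{p}=\operatorname{conv}\{v_0,v_2,v_3,p(t)\}, \qquad T_1^{p}=\operatorname{conv}\{v_1,v_2,v_3,p(t)\}.
\]
Each of these consists of four affinely independent points. The two $p$-dependent edges are $e_1(p)=[v_2,p(t)]$ and $e_2(p)=[v_3,p(t)]$. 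So every face of $T_i^p$ is the convex hull of three vertices, each of which is either fixed or an affine function of $t$. The strategy is to show that every inward unit normal appearing in the definition of $\alpha$ is a continuous function of $t$ on $(0,1)$. Then $\alpha_i=\arccos(\langle \hat n_1,\hat n_2\rangle)$ is a composition of continuous maps, since the inner product is continuous, its value lies in $[-1,1]$, and $\arccos$ is continuous on $[-1,1]$.

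For a face with vertices $a,b,c$ of $T_i^p$, I will use the opposite vertex $d$ of $T_i^p$ and set
\[
n(t)=(b-a)\times(c-a), \qquad \hat n(t)=\operatorname{sgn}\big(\langle n(t),d-a\rangle\big)\,\frac{n(t)}{\|n(t)\|}.
\]
The vector $n(t)$ is polynomial in $t$, hence continuous. Continuity of $\hat n$ on $(0,1)$ then needs two facts.
\begin{enumerate}
\item $n(t)\neq 0$, which holds because $a,b,c$ are affinely independent for $t\in(0,1)$.
\item The sign factor is locally constant, which holds because $\langle n(t),d-a\rangle$ equals $\pm 6\operatorname{vol}(T_i^p)$. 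This is a continuous function of $t$ that never vanishes on $(0,1)$, since $T_i^p$ is nondegenerate there. By the intermediate value theorem it has constant sign on $(0,1)$.
\end{enumerate}
This sign argument is the only real obstacle. It guarantees that the inward orientation does not flip as $p$ moves, and it is exactly the point where $p\in\operatorname{relint}(e')$ is used. At the endpoints the volume of one $T_i^p$ tends to $0$, which is why continuity is claimed only on the relative interior.

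Finally, for each edge in question I identify the two incident faces of $T_i^p$ as faces listed above, with vertex labels that do not depend on $t$. For the fixed edge $e=[v_2,v_3]$ in $T_0^p$ these are $\operatorname{conv}\{v_0,v_2,v_3\}$ and $\operatorname{conv}\{p,v_2,v_3\}$, and similarly for $T_1^p$. For $e_1(p)=[v_2,p]$ in $T_0^p$ they are $\operatorname{conv}\{v_0,v_2,p\}$ and $\operatorname{conv}\{v_2,v_3,p\}$, and analogously for $e_2(p)$ and for $i=1$. Since the choice of faces is combinatorially fixed for all $t\in(0,1)$, both normals vary continuously, and so does $\alpha_i$. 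As an optional check, continuity of $\alpha_1(e,\cdot)$ also follows from continuity of $\alpha_0(e,\cdot)$ via Equation \ref{ALG1}, and $\alpha_1(e_k(p),p)$ follows from $\alpha_0(e_k(p),p)$ via Equation \ref{ALG2}. This halves the number of cases to verify directly.
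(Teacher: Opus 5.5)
Your proof is correct and follows essentially the same route as the paper. Both arguments let the vertices of $T_i^p$ vary continuously in $p$, build face normals as cross products of edge vectors that cannot vanish on $\operatorname{relint}(e')$, normalize them, and compose with the inner product and $\arccos$; your signed-volume sign factor adds a check the paper skips, since it takes $n(p)=u_1(p)\times u_2(p)$ without verifying that this is the inward normal or that its orientation cannot flip as $p$ moves.
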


\begin{proof}
	Assume $T$ is a tetrahedron with opposite edges $e',e \in E(T)$, and the one parameter family $\{\Pi_p\}_{p \in e'}$ defines a family of subdivisions $\{T_0^p,T_1^p\}_{p \in e'}$ on $T$. The tetrahedra $T_i^p$ are obtained by intersecting fixed affine substructures of $T$ with the moving plane
	\begin{equation*}
		\Pi_p = \operatorname{aff}\{e \cup \{p\}\}
	\end{equation*}
	As a consequence, each vertex of $T_i^p$ arises as the solution of a finite system of affine equations whose coefficients depend smoothly on $p \in \operatorname{relint}(e')$. Thus, all vertices of $T_i^p$ vary continuously with respect to $p$.
	
	It follows then that each edge vector of $T_i^p$ is of the form
	\begin{equation*}
		u(p) = v_k(p)-v_l(p)
	\end{equation*}
	where $v_k(p)$ and $v_l(p)$ are vertices of $T_i^p$. Since each vertex depends continuously on $p$ in $\operatorname{relint}(e')$, every such vector $u(p)$ is a continuous $\mathbb{R}^3$-valued function of $p$. Now, since any face of $T_i^p$ is spanned by two such edge vectors, call them $u_1(p)$ and $u_2(p)$, the associated normal vector can be written as
	\begin{equation*}
		n(p) = u_1(p) \times u_2(p)
	\end{equation*}
	Because the cross product of two vectors is bilinear, $n(p)$ is a continuous function of $p$. Furthermore, since no face degenerates on $\operatorname{relint}(e')$, we have that $n(p) \neq 0$ for all $p \in \operatorname{relint}(e')$. Thus, the normalized vector
	\begin{equation*}
		\hat{n}(p) = \frac{n(p)}{||n(p)||}
	\end{equation*}
	is well defined and continuous on $\operatorname{relint}(e')$. Consequently, for both fixed edges $e$ and moving edges $e_j(p)$, the two incident face unit normals $\hat{n}_1(p)$ and $\hat{n}_2(p)$ depend continuously on $p$. Thus, the map
	\begin{equation*}
		p \mapsto \langle \hat{n}_1(p),\hat{n}_2(p) \rangle
	\end{equation*}
	is continuous and has an image contained within the interval $(-1,1)$. Finally, since 
	\begin{equation*}
		\arccos:(-1,1) \to [0,\pi]
	\end{equation*}
	is continuous, it follows that the functions
	\begin{equation*}
		p \longmapsto  \alpha_i(e,p), \quad p \longmapsto  \alpha_i(e(p),p) 
	\end{equation*}
	are continuous on $\operatorname{relint}(e')$.
\end{proof}

\begin{lemma}
	For each $i \in \{0,1\}$, the functions 
	\begin{equation*}
		p \longmapsto \alpha_i(e,p), \quad p \longmapsto \alpha_i(e(p),p)
	\end{equation*}
	are monotonic on $\operatorname{relint}(e')$.
\end{lemma}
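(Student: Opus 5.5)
The plan is to fix coordinates and reduce every case to a two-dimensional angle computation. By Lemma \ref{TS} and Definition \ref{FAMS} we may label so that $e=[v_2,v_3]$ and $e'=[v_0,v_1]$, with $p=p(t)=(1-t)v_0+tv_1$ for $t\in(0,1)$. Then
\[
T_0^p=\operatorname{conv}\{v_0,v_2,v_3,p\}, \qquad T_1^p=\operatorname{conv}\{v_1,v_2,v_3,p\},
\]
and the moving edges are $e_1(p)=[v_2,p]$ and $e_2(p)=[v_3,p]$. The relations \eqref{ALG1} and \eqref{ALG2} say that $\alpha_1(e,p)=\alpha_T(e)-\alpha_0(e,p)$ and $\alpha_1(e_j(p),p)=\pi-\alpha_0(e_j(p),p)$. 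So it suffices to prove monotonicity of the $\alpha_0$ functions; the $\alpha_1$ functions are then monotone in the opposite sense. Continuity from Lemma \ref{PROP1} lets us argue with strict monotonicity on $\operatorname{relint}(e')$ without worrying about the endpoints.

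\emph{Fixed edge.} Let $\rho$ be the orthogonal projection onto a plane perpendicular to the line $\operatorname{aff}(e)$. It sends $e$ to a point $o$, the vertices $v_0,v_1$ to points $a,b$, and $p(t)$ to $(1-t)a+tb$. The point $o$ is not on the line $ab$, since otherwise $T$ would be degenerate. Any plane containing $e$ projects to a line through $o$, and dihedral angles along $e$ equal planar angles at $o$. Hence $\alpha_0(e,p)=\angle a\,o\,\rho(p)$. As $\rho(p)$ slides along the segment $[a,b]$, which does not pass through $o$, this angle increases strictly from $0$ to $\angle aob=\alpha_T(e)$. This is an elementary planar fact: the ray $o\rho(p)$ sweeps monotonically through the convex angle $\angle aob$.

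\emph{Moving edges.} For $e_1(p)=[v_2,p]$ the direction of the edge itself moves, so a fixed projection no longer works. Instead I would pass to the link of $T$ at $v_2$. Let $A,B,C$ be the unit vectors from $v_2$ toward $v_0,v_1,v_3$, and let $P(t)$ be the unit vector toward $p(t)$. Then $P(t)$ traverses the great-circle arc from $A$ to $B$ monotonically, and this arc has length $<\pi$ since everything lies in the open cone of $T$ at $v_2$. The dihedral angle of $T_0^p$ along $[v_2,p]$ is the spherical angle at the vertex $P$ of the spherical triangle $APC$. Drop the spherical perpendicular from $C$ to the great circle through $A$ and $B$, with foot $F$ and length $h\in(0,\pi/2]$; we may replace $C$ by $-C$ if needed, which only changes the angle to its supplement. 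Let $s(t)$ be the signed arc length from $F$ to $P(t)$, which is monotone in $t$. Napier's rule for the right triangle $FPC$ gives
\[
\tan\angle(FPC)=\frac{\tan h}{\sin s},
\]
and $\alpha_0(e_1(p),p)$ is either $\angle FPC$ or its supplement according to the sign of $s$. The same argument at $v_3$ handles $e_2(p)$.

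\emph{Main obstacle.} The expected difficulty is the last step: showing that $s(t)$ stays in an interval on which this expression is monotone. Equivalently, the angle at $P$ is a monotone function of $s$ as long as $s$ stays in an interval of length at most $\pi$ around $F$ on which $\cot$ of the angle, namely $\sin s/\tan h$, is monotone. My first attempt would be to show directly that $s$ ranges inside $(-\pi/2,\pi/2)$, using that $A$, $B$, $C$ span a pointed cone, so that $P$ and $C$ lie in a common open hemisphere. If that fails, the fallback is a derivative computation. Write the angle as $\arccos\langle \hat n_1(t),\hat n_2(t)\rangle$ with explicit normals $\hat n_1$ to the fixed plane $\operatorname{aff}\{v_0,v_1,v_2\}$ and $\hat n_2(t)\propto (v_3-v_2)\times(p(t)-v_2)$, which is affine in $t$. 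Then show that the derivative of the resulting rational function of $t$ has a numerator of constant sign on $(0,1)$, using the affine independence of $v_0,\dots,v_3$.
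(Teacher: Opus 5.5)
The gap is the moving-edge step you flagged as the main obstacle. It cannot be closed, because the moving-edge half of the statement is false; note also that the paper's own proof of that half does not hold up.

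Your reduction of the $\alpha_1$ functions to the $\alpha_0$ functions is fine, and so is your fixed-edge argument. Projecting along $e$ is a valid alternative to the paper's additivity argument: for $r<q$ the plane $\Pi_r$ subdivides $T_0^q$ along $[v_2,v_3]$, so $\alpha_0([v_2,v_3],q)$ exceeds $\alpha_0([v_2,v_3],r)$ by a positive dihedral angle. Your version also gives the endpoint limits $0$ and $\alpha_T(e)$ directly.

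Your own right triangle $FPC$ shows why the moving-edge step fails. By spherical Pythagoras, $\langle P(t),C\rangle=\cos h\cos s(t)$. So, for $h<\pi/2$, $s(t)$ passes through $\pm\pi/2$ exactly when $\langle P(t),C\rangle$ changes sign on $(0,1)$. Equivalently, the affine function $t\mapsto\langle p(t)-v_2,\,v_3-v_2\rangle$ changes sign. At that parameter $|\sin s|$ has a strict maximum, so the angle at $P$ has a strict extremum.

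Your pointed-cone observation only places $P$ and $C$ in \emph{some} common open hemisphere, not in the one centred at $C$. It therefore says nothing about the sign of $\langle P,C\rangle$. Your derivative fallback has the same problem: the derivative works out to a nonvanishing factor times $\langle p(t)-v_2,\,v_3-v_2\rangle$, whose sign is not constant in general.

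Here is a concrete counterexample. Let $v_0=(1,-1,0)$, $v_1=(1,1,0)$, $v_2=(0,0,0)$, $v_3=(0,1,1)$, with $e=[v_2,v_3]$, $e'=[v_0,v_1]$, and $p=(1,y,0)$ for $y\in(-1,1)$.
In $T_0^p$, the inward unit normals of $\operatorname{conv}\{v_0,v_2,p\}$ and $\operatorname{conv}\{v_2,v_3,p\}$ are $(0,0,1)$ and $(y,-1,1)/\sqrt{y^2+2}$. Hence $\alpha_0([v_2,p],p)=\arccos\bigl(1/\sqrt{y^2+2}\bigr)$. This strictly decreases on $(-1,0]$ and strictly increases on $[0,1)$. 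The function $\alpha_1([v_2,p],p)=\pi-\alpha_0([v_2,p],p)$ is equally non-monotone, and switching to the geometric interior angle only replaces the function by its supplement. Here $\langle p-v_2,v_3-v_2\rangle=y$, which changes sign at $y=0$, exactly as predicted.

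What your method actually proves is a corrected statement. The map $p\mapsto\alpha_0([v_2,p],p)$ is monotone on $\operatorname{relint}(e')$ if and only if $\langle v_0-v_2,v_3-v_2\rangle$ and $\langle v_1-v_2,v_3-v_2\rangle$ are not of strictly opposite signs. In other words, the face angles $\angle v_0v_2v_3$ and $\angle v_1v_2v_3$ must not be one acute and one obtuse. The analogous condition at $v_3$ governs $[v_3,p]$.

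You will not find a fix in the paper either. Its proof claims that non-injectivity forces constancy. To get there, it deduces from $R(\Pi_q)=\Pi_p$, with $R$ a rotation about the $z$-axis, that the $z$-axis lies in $\Pi_p\cap\Pi_q$. That inference is invalid: in the example, $y=\pm 1/2$ give equal values although $v_3$ is not on the $z$-axis.
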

\begin{proof}
	Without loss of generality, assume $T = \operatorname{conv}\{v_0,v_1,v_2,v_3\}$, $e' = [v_0,v_1]$ and define the one parameter family of affine planes $\{\Pi_p\}_{p \in [v_0,v_1]}$ such that each plane $\Pi_p$ fully contains the edge $[v_2,v_3]$. Then the corresponding family of subdivisions $\{T_0^p,T_1^p\}_{p \in [v_0,v_1]}$ contains elements of the form
	\begin{equation*}
		T_0^p = \operatorname{conv} \{v_0,v_2,v_3,p\}, \quad T_1^p = \operatorname{conv} \{v_1,v_2,v_3,p\}
	\end{equation*}
	that have shared faces defined by
	\begin{equation*}
		T_0^p \cap T_1^p = \operatorname{conv}\{v_2,v_3,p\}    
	\end{equation*}
	and has a configuration space $\xi$ that defines the general dihedral angle function $\alpha: \xi \to [0,\pi]$. We begin by showing that the function 
	\begin{equation*}
		p \longmapsto \alpha_i([v_2,v_3],p)
	\end{equation*}
	associated with the fixed edge $[v_2,v_3]$ is increasing for $i=0$, and likewise decreasing for $i=1$. Fix $r,q\in \operatorname{relint}([v_0,v_1])$ with $r<q$. Since $r \in \operatorname{relint}([v_0,q])$, we have that $T_0^r \subset T_0^q$. Moreover, $r$ induces a tetrahedral subdivision of $T_0^q$ into the sub-tetrahedra
	\begin{equation*}
		T_0^r= (T_0^q)^r_0 = \operatorname{conv}\{v_0,v_2,v_3,r\}, \quad (T_0^q)^r_1 = \operatorname{conv}\{q,v_2,v_3,r\}
	\end{equation*}
	Let
	\begin{equation*}
		\alpha_{(T_0^q)^r_0}: E((T_0^q)^r_0) \to [0,\pi], \quad \alpha_{(T_0^q)^r_1}: E((T_0^q)^r_1) \to [0,\pi]
	\end{equation*}
	denote the fixed dihedral angle functions associated with these tetrahedra. Since $(T_0^q)^r_0$ and $(T_0^q)^r_1$ form a tetrahedral subdivision of $T_0^q$ along the edge $[v_2,v_3]$, we have that
	\begin{equation*}
		\alpha_0([v_2,v_3],q) = \alpha_{(T_0^q)^r_0}([v_2,v_3])+\alpha_{(T_0^q)^r_1}([v_2.v_3])
	\end{equation*}
	Since both $(T_0^q)^r_0$ and $(T_0^q)^r_1$ are non-degenerate, this sum is positive and well defined. Then it follows that
	\begin{align*}
		\alpha_0([v_2,v_3],r) = \alpha_{(T_0^q)^r_0}([v_2,v_3]) &< \alpha_{(T_0^q)^r_0}([v_2,v_3])+\alpha_{(T_0^q)^r_1}([v_2.v_3]) \\
		&= \alpha_{0}([v_2,v_3],q)
	\end{align*}
	Thus $\alpha_i([v_2,v_3],p)$ is increasing for $i=0$, and likewise, decreasing for $i=1$. 
	
	Because of our assumptions on $e$ and $e'$ the edge $e(p)\in E(T_i^p)$, which depends on $p$, must be of the form $[v_2,p]$ or $[v_3,p]$. To prove that $\alpha_i(e(p),p)$ is monotonic we will assume that $e(p) = [v_2,p]$ and that $i=0$, the other cases are identical.
	
	To show $\alpha_0([v_2,p],p)$ is monotonic it suffices to show that $\alpha_0([v_2,p],p)$ is either a constant function or injective. Indeed, if $\alpha_0([v_2,p],p)$ is injective it is also continuous by Lemma 3.6, it follows that $\alpha_0([v_2,p],p)$ must be monotonic since all injective continuous functions on an interval are monotonic. Now, we aim to show that if $\alpha_0([v_2,p],p)$ is not injective it must be constant. 
	
	Suppose $g$ is not injective, then there exists $p,q\in \operatorname{relint}([v_0,v_1])$ with $p\neq q$ and $\alpha_0([v_2,p],p)=\alpha_0([v_2,q],q)$. Observe that applying isometries to $T$ does not change the value of its dihedral angles, so the value of $\alpha_0([v_2,p],p)$ remains invariant under translations and rotations of $T$. As such, we will translate $T$ so that $v_2 = 0 \in \mathbb{R}^3$ and rotate $T$ so the face $\operatorname{conv}\{v_0,v_1,v_2\}$ lies within the $xy$-plane and reflect about $xy$-plane so that $v_3$ has positive $z$ coordinate. This last reflection ensures that the inward facing normal of the face $\operatorname{conv}\{v_0,v_1,v_2\}$ is $(0,0,1)$. With this configuration we know that,
	$\alpha_0([v_2,p],p)$ is the angle between the inward facing normals of the planes $\Pi_p$ and $\operatorname{aff}\{v_0,v_2, p\}$
	However,
	\[
	\operatorname{conv}\{v_0,v_2,p\} \subset \operatorname{conv}\{v_0,v_1,v_2\} \implies \operatorname{aff}\{v_0,v_2, p\} = \operatorname{aff}\{v_0,v_1, v_2\}
	\]
	By the configuration of $T$, the last plane is simply the $xy$-plane, hence $\alpha_0([v_2,p],p)$ is the angle between the inward facing normal of $\Pi_p$ and $(0,0,1)$. Identically, $\alpha_0([v_2,q],q)$ is the angle between the inward facing normal of $\Pi_q$ and $(0,0,1)$. Because $\alpha_0([v_2,p],p)=\alpha_0([v_2,q],q)$, we know that $\Pi_p$ and $\Pi_q$ form the same dihedral angle with the $xy$-plane.
	
	Next, let $R$ be an orientation-preserving rotation about the $z$-axis such that $R(q) \in \operatorname{span}\{p\}$.
	This means that the two planes $R(\operatorname{span}\{q,v_3\})$ and $\operatorname{span}\{p,v_3\}$ intersect the $xy$-plane along the same line, $\operatorname{span}\{p\}$. Since $R$ is an orientation preserving isometry, it follows that $R(\operatorname{span}\{q,v_3\})$ and $\operatorname{span}\{p,v_3\}$ form the same angle with the $xy$-plane, moreover, the normal vectors of these two planes point in the same direction relative to their line of intersection. All of this implies that these planes are equal,
	\[
	R(\operatorname{span}\{q,v_3\}) = \operatorname{span}\{p,v_3\}
	\]
	
	However, this means that the axis of rotation, the $z$-axis, is the line of intersection between the two planes $\Pi_p$ and $\Pi_q$.
	\[
	\operatorname{span}\{p,v_3\}\cap \operatorname{span}\{q,v_3\} = \operatorname{span}\{v_3\}
	\] 
	In particular, $v_3$ lies within the $z$-axis. Thus, for any $r\in \operatorname{relint}([v_0,v_1])$ we have $\Pi_r=\operatorname{aff}\{v_2,v_3,r\}=\operatorname{span}\{v_3,r\}$ and it is clear that $\Pi_r$ forms a right angle with the $xy$-plane. Hence, the inward facing unit normal vector of $\Pi_r$, call it $\hat n_r$, has no $z$-component. Meaning,
	\[
	\langle n_r ,(0,0,1) \rangle = 0
	\]
	This lets us calculate $\alpha_0([v_2,r],r)$,
	\[
	\alpha_0([v_2,r],r) =\arccos\!\big(\langle \hat n_r, (0,0,1)\rangle\big) = \arccos(0) = \frac{\pi}{2}
	\]
	This is true for any $r\in \operatorname{relint}([v_0,v_1])$, so $p \longmapsto \alpha_0([v_2,p],p)$ is constant.

\end{proof}

\section{Classifications and The Dihedral Criterion}\label{tetrahedra}
Over 2300 years ago, Euclid of Alexandria, the father of Euclidean Geometry, classified triangles in $\mathbb{R}^2$ by the lengths of their edges and the interior angles measured between their edges. In particular, Euclid formalized the terms "obtuse," "acute," and "right" to describe both the types of angles that may occur and the subsequent types of triangles. By formally defining these terms, Euclid provided mathematicians with the tools necessary to, when possible, identify the underlying simplified structure of more complex geometric problems.

For tetrahedra in $\mathbb{R}^3$ analyzing underlying structure becomes much more difficult, as the number of empirical quantities determining particular properties of a tetrahedra vastly increases. To avoid an overcomplication of scope, we propose a dihedral angle classification that aims to collapse the infinite number of tetrahedra that may occur, while preserving a level of freedom in argument.

\begin{defin}
	A tetrahedron $T$ is said to be \textit{Class $n$} if it has exactly $n$ obtuse dihedral angles. A \emph{subclass} of Class $n$ tetrahedra is an equivalence class of 
	configurations of $n$ obtuse edges under relabeling of vertices.
\end{defin}

While not geometrically exact representations, for the purpose of visual simplicity, we switch to using the full graph $K_4$ for illustrating unique subclasses. Subclasses are distinguished by letter labels according to the adjacency structure of these obtuse edges. Configurations in which all obtuse dihedral angles are incident to a single vertex are labeled (a), while configurations not sharing a common vertex are labeled (b). Since Class 1 tetrahedra admit only one subclass, no subclass identifier is assigned. 
\begin{figure}[h!]
	\centering
	\includegraphics[scale=.8]{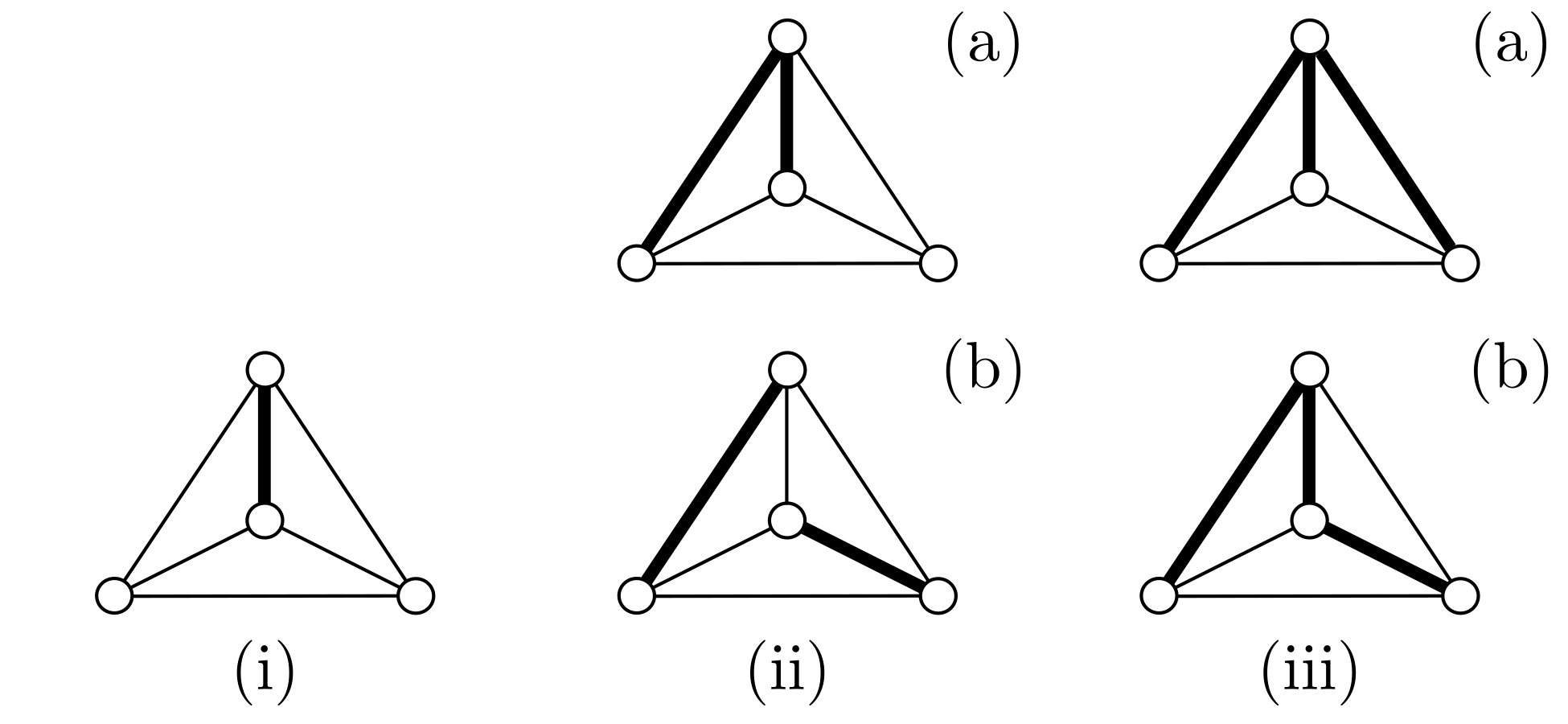}
	\caption{Various Subclasses of Class $n$ tetrahedra. Thickened edges indicate where obtuse dihedral angles occur. (i) Class 1, (ii) Class 2 subclasses, (iii) Class 3 subclasses. }
	\label{SUB}
\end{figure}

From Figure \ref{SUB}, two  immediate observations follow; Class 3 does not contain a subclass corresponding to a 3-Cycle, and for $n >3$ no such Class $n$ tetrahedron exists. The former can be verified by expressing the supposed face $F$ containing three obtuse dihedral angles as the intersection of three half-spaces. In this case, the orthogonal projection of the opposite vertex would necessarily lie outside all three associated half-planes simultaneously, which is impossible. The latter follows from a theorem resulting from an open problem initially proposed by Klamkin and Pook \cite{KlamkinPook1988}, and later proven by Leng \cite{Leng2003}.

\begin{thm}\label{T1}
	There exists at least $n$ acute dihedral angles in any $n$-simplex, and there exists an $n$-simplex which has only $n$ acute dihedral angles.
\end{thm}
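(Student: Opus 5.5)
The plan has two halves: a lower bound showing that every $n$-simplex has at least $n$ acute dihedral angles, and an explicit construction attaining exactly $n$. I would work throughout with the outward unit normals $u_0,\dots,u_n$ of the facets $F_0,\dots,F_n$ of an $n$-simplex $S$, where $F_i$ is opposite the vertex $v_i$. The one fact I rely on is the Minkowski relation
\[
\sum_{i=0}^n a_i u_i = 0,
\]
where $a_i>0$ is the $(n-1)$-volume of $F_i$. Any $n$ of the $u_i$ are linearly independent. The dihedral angle $\theta_{ij}$ at the ridge $F_i\cap F_j$ satisfies $\cos\theta_{ij} = -\langle u_i,u_j\rangle$. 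So $\theta_{ij}$ is acute exactly when $\langle u_i,u_j\rangle<0$, right when the inner product is $0$, and obtuse when it is positive. The task is therefore to show that the graph $G$ on $\{0,\dots,n\}$ with edges $\{i,j\}$ such that $\langle u_i,u_j\rangle<0$ has at least $n$ edges.

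For the lower bound I would show that $G$ is connected. Then it has at least $n$ edges, since it has $n+1$ vertices. Suppose instead that the index set splits as $A\sqcup B$, both nonempty, with $\langle u_a,u_b\rangle\ge 0$ for every $a\in A$ and $b\in B$. Set $w=\sum_{a\in A} a_a u_a = -\sum_{b\in B} a_b u_b$. Then
\[
\|w\|^2 = -\sum_{a\in A}\sum_{b\in B} a_a a_b \langle u_a,u_b\rangle \le 0,
\]
so $w=0$. This gives $\sum_{a\in A} a_a u_a = 0$, a nontrivial linear dependence among at most $n$ of the normals (since $B\neq\emptyset$), which contradicts their independence. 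Hence $G$ is connected and there are at least $n$ acute dihedral angles. In the paper's setting $n=3$, this immediately excludes Class $n$ for $n>3$, since $6-3=3$.

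For sharpness I would use the orthoscheme (path simplex) $S=\operatorname{conv}\{0,e_1,e_1+e_2,\dots,e_1+\cdots+e_n\}$. Its facet normals can be computed directly: the facets lie in the hyperplanes $x_1=0$, $x_k=x_{k+1}$ for $1\le k\le n-1$, and $x_n=0$. The normals are, up to sign, $e_1$, $e_k-e_{k+1}$, and $e_n$. Consecutive normals in this chain have negative inner product with the correct orientation, and all non-consecutive pairs are orthogonal. So $G$ is a path with exactly $n$ edges, giving exactly $n$ acute angles and right angles elsewhere. For $n=3$ this is the familiar tetrahedron with three right dihedral angles.

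The main obstacle is stating the Minkowski relation cleanly and fixing orientation conventions consistently, so that acuteness matches the sign of $\langle u_i,u_j\rangle$ as computed from the paper's inward-normal definition. I would justify the relation by the divergence theorem applied to a constant vector field, or as the cited result of Leng.
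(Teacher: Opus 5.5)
Your proof is correct. It necessarily takes a different route from the paper, because the paper gives no proof of Theorem~\ref{T1}: it imports the result from Klamkin--Pook and Leng and only uses the $n=3$ count (at most three obtuse dihedral angles). Your argument uses a positive linear relation among the outward facet normals and shows that the acute-angle graph on the facets is connected. This makes the paper self-contained, and it proves more than the count. Taking $|A|=1$ in your argument shows that every facet has an acute dihedral angle along at least one of its ridges. For tetrahedra this is exactly the statement the paper proves by a separate projection argument to rule out the ``3-cycle'' subclass, and your version excludes it even when right angles are allowed. So your lemma covers both facts that Section~\ref{tetrahedra} relies on.

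There are three small corrections.
\begin{itemize}
\item \emph{The orthoscheme.} The facet opposite the origin lies in $x_1=1$, not $x_1=0$. The simplex is $\{1\ge x_1\ge x_2\ge\cdots\ge x_n\ge 0\}$, so the outward normals are $u_0=e_1$, $u_k=(e_{k+1}-e_k)/\sqrt{2}$ for $1\le k\le n-1$, and $u_n=-e_n$. Then $\langle u_0,u_1\rangle=\langle u_{n-1},u_n\rangle=-1/\sqrt{2}$, $\langle u_k,u_{k+1}\rangle=-1/2$ for $1\le k\le n-2$, and all other products vanish. Your conclusion therefore stands.
\item \emph{The Minkowski relation.} Leng's theorem is not the source of this relation. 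Besides the divergence theorem, the quickest justification is barycentric. The coordinate function $\lambda_i$ is affine with $\nabla\lambda_i=-u_i/h_i$, where $h_i$ is the height over $F_i$. Since $\sum_i\lambda_i\equiv 1$, you get $\sum_i u_i/h_i=0$, and a relation with all coefficients positive is all your argument needs. Because $(\lambda_i)_{i\ne j}$ is an affine coordinate system, the same picture also gives the linear independence of any $n$ normals, which you asserted without proof.
\item \emph{Orientation.} Your formula $\cos\theta_{ij}=-\langle u_i,u_j\rangle$ is correct, and it holds equally if both normals are taken inward. The paper's displayed definition $\arccos\langle\hat n_1,\hat n_2\rangle$ with two inward normals actually returns $\pi-\theta$; for the regular tetrahedron it gives $\arccos(-1/3)$. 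The computation in the proof of Lemma~\ref{L1}, with $n=(0,0,1)$ and $m=v_1\times v_3$, pairs an inward normal with an outward one, so it does use the true interior angle. Keep your interior-angle convention rather than trying to match the paper's formula literally.
\end{itemize}
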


The quantification of the minimal number of acute dihedral angles in a tetrahedron imposes a corresponding upper bound on the number of obtuse dihedral angles, thereby allowing the use of counting arguments in place of explicit geometric proofs. With this established, we now move to discuss a subtle, yet important property pertaining to the projective nature of height function tetrahedra. In particular, we introduce a tool for determining the existence of a foundation face $F$ of $T$.

\begin{lemma}[Height-Dihedral Criterion]\label{L1}
	Let $T  \subset \mathbb{R}^3$ be a tetrahedron and $F$ be a face of $T$ with edges $e_1,e_2,e_3$.
	Then $T$ is a height function relative to the foundation $F$ if and only if the dihedral angles along the edges of $F$ are all not obtuse.
\end{lemma}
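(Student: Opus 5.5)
Since $\pi_F$ is affine and $T=\operatorname{conv}\{v_0,v_1,v_2,v_3\}$, the image $\pi_F(T)$ is the convex hull of the images of the vertices. Label $F=\operatorname{conv}\{v_1,v_2,v_3\}$, with $v_0$ the opposite vertex and $e_k$ the edge of $F$ opposite $v_k$. The three vertices of $F$ are fixed by $\pi_F$, so
\[
\pi_F(T)=\operatorname{conv}\{\pi_F(v_0),v_1,v_2,v_3\}.
\]
Hence $\pi_F(T)\subseteq F$ holds if and only if $\pi_F(v_0)\in F$. I will then write $F$ as the intersection, inside $\operatorname{aff}(F)$, of three closed half-planes $h_1,h_2,h_3$, where $h_k$ is bounded by the line $\operatorname{aff}(e_k)$ and contains $v_k$. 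With this description the condition becomes $\pi_F(v_0)\in h_k$ for each $k$, and the lemma reduces to proving, separately for each edge $e_k$,
\[
\pi_F(v_0)\in h_k \iff \alpha_T(e_k)\le \tfrac{\pi}{2}.
\]

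To prove this single-edge equivalence, I will pass to the plane orthogonal to $e_k$, in the same spirit as the two-dimensional side view of Figure~\ref{ANGLE1}. Choose coordinates with $\operatorname{aff}(F)=\{z=0\}$, $v_0$ in $\{z>0\}$, $e_k$ lying on the $x$-axis, and $v_k$ in $\{y>0\}$, so that $h_k=\{y\ge 0, z=0\}$. The inward normal of $F$ is then $\hat n_F=(0,0,1)$. The other face containing $e_k$ is $G=\operatorname{conv}(e_k\cup\{v_0\})$. Write $v_0=(a,b,c)$ with $c>0$. The plane $\operatorname{aff}(G)$ contains the $x$-axis, so its normal is orthogonal to $(1,0,0)$ and proportional to $(0,-c,b)$, up to sign.

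The inward orientation is fixed by requiring the normal to point toward $v_k$. Since $v_k=(x_k,y_k,0)$ with $y_k>0$, this selects $\hat n_G=(0,c,-b)/\sqrt{b^2+c^2}$, because $\langle (0,c,-b),v_k\rangle=c\,y_k>0$. It follows that
\[
\cos\alpha_T(e_k)=\langle \hat n_F,\hat n_G\rangle=\frac{-b}{\sqrt{b^2+c^2}}.
\]
Consequently $\alpha_T(e_k)\le\pi/2$ if and only if $b\ge 0$. Since $\pi_F(v_0)=(a,b,0)$, the condition $b\ge 0$ is exactly $\pi_F(v_0)\in h_k$, which proves the single-edge equivalence. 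Taking the conjunction over $k=1,2,3$ then gives both directions of the lemma at once.

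The main obstacle is the orientation bookkeeping: one must make sure that the paper's convention, $\alpha_T(e)=\arccos\langle\hat n_1,\hat n_2\rangle$ with inward normals, really measures the interior dihedral angle, with obtuse meaning $\cos<0$. The coordinate computation above settles this once the inward normal of $G$ is fixed by testing it against $v_k$. A secondary point is the boundary case of a right angle. There $b=0$, so $\pi_F(v_0)$ lies on $\operatorname{aff}(e_k)$, which is still inside the closed half-plane $h_k$; this is consistent with the phrase ``not obtuse'' in the statement.
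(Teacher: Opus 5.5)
Your route matches the paper's. You reduce to $\pi_F(v_0)\in F$, write $F=h_1\cap h_2\cap h_3$, and then, edge by edge, compare the sign of the opposite vertex's $y$-coordinate with the sign of the cosine of the dihedral angle in normalized coordinates. The paper does the two directions separately, but with the same normalization.

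The problem is the step beginning \emph{Consequently}. Your computation $\langle \hat n_F,\hat n_G\rangle=-b/\sqrt{b^2+c^2}$ is correct. From it, $\arccos\langle \hat n_F,\hat n_G\rangle\le\pi/2$ is equivalent to $b\le 0$, not $b\ge 0$. This sign flip is not cosmetic. It shows that pairing two \emph{inward} normals measures $\pi-\theta$, the supplement of the interior dihedral angle $\theta$. For a regular tetrahedron your formula gives $-1/3$, i.e.\ about $109.5^\circ$ at every edge, yet a regular tetrahedron is a height function over every face. So your closing remark that the computation \emph{settles} the orientation issue in favour of the inward--inward convention is backwards. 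Read literally with that convention, the forward direction of the lemma is false, and your argument reaches the right conclusion only because of the sign error.

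The repair is to compute the interior angle itself. One option is one inward and one outward normal, e.g.\ $(0,0,1)$ and $(0,-c,b)/\sqrt{b^2+c^2}$. Another, as the paper does in its reverse direction, is the unit vectors lying in the two faces, perpendicular to $e_k$ and pointing into the faces: $(0,1,0)$ and $(0,b,c)/\sqrt{b^2+c^2}$. Either choice gives $\cos\theta=b/\sqrt{b^2+c^2}$, hence $\theta\le\pi/2\iff b\ge 0\iff\pi_F(v_0)\in h_k$. The rest of your proof then goes through unchanged.

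The paper's forward direction does the same thing implicitly. Its normal $m=v_1\times v_3=(0,-x_1,x_1y_3)$ satisfies $\langle m,v_2\rangle=-x_1y_2<0$, so it is the outward normal of the second face. The lemma must therefore be read with the interior dihedral angle, not the inward--inward convention of the paper's definition.
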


\begin{proof}
	$(\Rightarrow)$ Let $T$ be a height function relative to the foundation $F$. After possibly relabeling the vertices of $T$ we may assume $F = \operatorname{conv}\{v_0,v_1,v_2\}$. Notice that applying isometries and scalings to $\mathbb{R}^3$ does not alter the dihedral angles of $T$, as such, we will translate, rotate, reflect, and scale $T$ in order to produce a simpler environment for studying $\alpha_T([v_0,v_1]), \alpha_T([v_0,v_2])$ and $\alpha_T([v_1,v_2])$. After applying a translation, assume $v_0 = 0 \in \mathbb{R}^3$. Next, we apply a rigid rotation of $\mathbb{R}^3$ such that the affine hull of $F$ coincides with the $xy$-plane and the edge $[v_0,v_1]$ lies along the positive $x$-axis. In particular, this means that $v_1,v_2 \in \mathbb{R}^3$ have zero $z$ component. Hence, we may write
	\begin{equation*}
		v_1 = (x_1,0,0), \quad v_2 = (x_2,y_2,0)
	\end{equation*}
	with $x_1>0$. If necessary, reflect $T$ across the $xz$ plane to ensure $y_2>0$, and scale $T$ so that
	\begin{equation*}
		v_3 = (x_3,y_3,1)\quad
	\end{equation*}
	
	Considering the face $K = \operatorname{conv}\{v_0,v_1,v_3\}$ we move to demonstrate that $\theta_1$, the dihedral angle that occurs at the edge $[v_0,v_1]$ shared by $F$ and $K$ must be non-obtuse. Let $P_1 = \operatorname{aff}\{v_0,v_1,v_2\}$ be the plane containing $F$, and $P_2 = \operatorname{aff}\{v_0,v_1,v_3\}$ be the plane containing $K$. Since $v_0=0$ these planes simplify as,
	\[
	P_1= \operatorname{span} \{v_1,v_2\} \quad \text{ and } \quad P_2= \operatorname{span} \{v_1,v_3\}
	\]
	Because $v_1,v_2$ are linearly independent vectors in the $xy$ plane we conclude that $P_1$ is the $xy$ plane, as such, the vector $n=(0,0,1)$ is normal to $P_1$. Also, 
	\begin{equation*}
		m = v_1\times v_3 = (x_1,0,0) \times (x_3,y_3,1) = (0,-x_1,x_1y_3)
	\end{equation*}
	is normal to $P_2$. Then the dot product between both normal vectors is 
	\begin{align*}
		\langle m , n \rangle  &= (0,-x_1,x_1y_3) \cdot (0,0,1) \\
		&= x_1y_3
	\end{align*}
	Because $x_1>0$, it remains to show that $y_3 \ge 0$. Since $F$ is a foundation of $T$,
	\begin{equation*}
		\pi_F(v_3)= (x_3,y_3,0) \in F
	\end{equation*}
	Then by the convexity of $F$, there exists $r,s,t \ge 0$ with $r+s+t = 1$ such that
	\begin{equation*}
		(x_3,y_3,0) = rv_0+sv_1+tv_2 = sv_1+tv_2
	\end{equation*}
	Hence 
	\begin{equation*}
		(x_3,y_3,0) = (sx_1+tx_2,ty_2,0) \implies y_3 =ty_2
	\end{equation*}
	Since $y_2 >0$ and $t\ge 0$, we get that $y_3 \ge 0$, and therefore
	\begin{equation*}
		\langle n , m \rangle = x_1y_3 \ge 0 
	\end{equation*}
	Due to the dot product angle formula, we have that $\cos(\alpha_T(e_1)) \ge 0 $, and subsequently $\alpha_T([v_0,v_1]) \le \frac{\pi}{2}$. Because the initial normalization and coordinate transformations may be applied relative to any edge of $F$, the preceding argument applies identically to the remaining edges of $F$. Therefore, the remaining angles $\alpha_T([v_0,v_2])$ and $ \alpha_T([v_1,v_2])$ are non-obtuse.

	$(\Leftarrow)$ Suppose the dihedral angles of the edges of $F=\operatorname{conv}\{v_0,v_1,v_2\}$, given by, 
	\[
	\alpha_T([v_0,v_1]),\alpha_T([v_0,v_2]),\alpha_T([v_1,v_2])
	\]
	are all non-obtuse. To show $F$ is a foundation of $T$ we must show that $\pi_F(x) \in F$ for all $x\in T$. In fact, it is sufficient to show that $\pi_F(v_3)\in F$. Indeed, if $\pi_F(v_3) \in F = \operatorname{conv}\{v_0,v_1,v_2\}$ then, 
	\begin{equation*}
		\pi_F(v_3) = \sigma_0v_0+\sigma_1v_1+\sigma_2v_2 ,  \quad \text{ with } \sigma_i \ge 0 , \text{ and } \sum^2_{i=0} \sigma_i = 1
	\end{equation*}
	By the convexity of $T$, we may express any $x\in T$ as a convex combination:
	\begin{equation*}
		x = \lambda_0v_0+\lambda_1v_1+\lambda_2v_2+\lambda_3v_3 ,  \quad \text{ with } \lambda_i \ge 0 , \text{ and } \sum^3_{i=0} \lambda_i = 1
	\end{equation*}
	Then the orthogonal projection of $x$ onto the plane $\operatorname{aff}(F)$ is given by,
	\begin{align*}
		\pi_F(x) &= \lambda_0\pi_F(v_0)+\lambda_1\pi_F(v_1) +\lambda_2\pi_F(v_2)+\lambda_3 \pi_F(v_3) \\
		&= \lambda_0v_0+\lambda_1v_1+\lambda_2v_2 +\lambda_3 \pi_F(v_3)\\
		&= (\lambda_0 + \lambda_3\sigma_0 )v_0+(\lambda_1+\lambda_3\sigma_1)v_1+(\lambda_2 + \lambda_3\sigma_2) v_2 
	\end{align*}
	Observe that,
	\[
	\lambda_i+\lambda_3\sigma_i \ge 0 \quad \text{ and } \quad \sum_{i=0}^2(\lambda_i+\lambda_3\sigma_i) = \sum_{i=0}^3\lambda_i=1
	\]
	Therefore $\pi_F(x)\in \operatorname{conv}\{v_0,v_1,v_2\} = F$.
	
	With this in mind, we aim to show that $\pi_F(v_3) \in F$. Note that applying isometries and scaling $\mathbb{R}^3$ does not change whether or not $F\subset T$ is a foundation. As such, we will apply the exact same translation, rotations, reflections and scaling to $T$ as we did above.
	
	We start by extending the edge $[v_0,v_1]$ into a line $L=\mathrm{span}\{v_1\}$ (since $v_0 = 0$ and $v_1=(x_1,0,0)$, $L$ is simply the  $x$ axis). Observe that $L$ partitions the plane containing $F$ (the $xy$ plane) into two half spaces, namely
	\begin{equation*}
		H_1^+=\{(x,y):y\geq0\} \quad \text{and} \quad  H_1^-=\{(x,y):y<0\}
	\end{equation*}
	By our configuration of $T$, $v_2 = (x_2,y_2,0)$ with $y_2>0$. Hence, $v_2\in H_1^+$ and $F=\operatorname{conv}\{v_0,v_1,v_2\}\subset H_1^+$. 
	
	Note that the dihedral angle $\alpha_T([v_0,v_1])$ is given by the angle between the planes $\operatorname{aff}\{v_0,v_1,v_2\}$ and $\operatorname{aff}\{v_0,v_1,v_3\}$. Another way to express $\alpha_T([v_0,v_1])$ is as the angle between two vectors $n\in \operatorname{aff}\{v_0,v_1,v_2\}, m\in \operatorname{aff}\{v_0,v_1,v_3\}$ that are both orthogonal to the intersection line of the planes. It will soon be clear why we wish to express $\alpha_T([v_0,v_1])$ in this way, but first we aim to construct these vectors $n\in \operatorname{aff}\{v_0,v_1,v_2\}, m\in \operatorname{aff}\{v_0,v_1,v_3\}$ that are both orthogonal to $L=\operatorname{aff}\{v_0,v_1,v_2\} \cap \operatorname{aff}\{v_0,v_1,v_3\}$.
	
	Define $n$ as follows:
	\[
	n=(0,1,0)\in \operatorname{aff}\{v_0,v_1,v_2\}
	\]  
	Clearly, $n \perp L$ because $L$ is the $x$ axis.
	
	Next, let $\pi_L(v_3) = (x_3,0,0) \in L$, the orthogonal projection of $v_3$ onto the line $L$. By the properties of orthogonal projection,
	\begin{equation*}
		v_3-\pi_L(v_3) \perp L.
	\end{equation*}
	Moreover, $\pi_L(v_3)\in L$ means that $\pi_L(v_3) =tv_1$ for some $t\in \mathbb{R}$. Then, 
	\[
	v_3-\pi_L(v_3) = v_3-tv_1\in \operatorname{span}\{v_1,v_3\}=\operatorname{aff}\{v_0,v_1,v_3\}
	\]
	So, if we let $m= v_3-\pi_L(v_3)$ we have our desired vectors.
	Thus, $\theta_1$ is given by the angle between $n$ and $m$.
	Because $\theta_1$ is non-obtuse, it follows that,
	\begin{equation*}
		0\le n\cdot m = (0,1,0)\cdot(0,y_3,1) = y_3.
	\end{equation*}
	Therefore, $\pi_F(v_3) = (x_3,y_3,0) \in H_1^+$. Repeatedly applying this argument to the remaining     two edges of $F$ yields $\pi_F(v_3) \in H_1^+\cap H_2^+ \cap H_3^+ =F$. Therefore $F\subset T$ is a foundation and $T$ is a height function.

\end{proof}

\section{Existence of Height Function Decompositions}\label{BISEC}

The dihedral angle classification and Lemma \ref{L1} proved in the previous section allows us to prove the first primary result.
\begin{thm}\label{THM2}
	Let $T$ be a tetrahedron. Either $T$ is a height function, or, at most one tetrahedral subdivision of $T$ is needed to produce two height function tetrahedra $T_0$ and $T_1$.
\end{thm}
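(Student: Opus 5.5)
By Lemma \ref{L1}, $T$ is a height function if and only if some face $F\in F(T)$ has no obtuse edge. So I would first reduce the statement to a combinatorial question on $K_4$ using the classification of Section \ref{tetrahedra}. Each edge lies in exactly two faces, and by Theorem \ref{T1} there are at most three obtuse edges. I would then go through the subclasses of Figure \ref{SUB}. Class $0$ and Class $1$ always leave a face free of obtuse edges. In Class 2(a) the two obtuse edges share a face, so they touch only three faces. In Class 3(a) the obtuse edges form a star at a vertex $v$, and the face opposite $v$ is free. All of these are height functions. The remaining cases are Class 2(b), two opposite obtuse edges, and Class 3(b), an obtuse path $[v_0,v_1],[v_1,v_2],[v_2,v_3]$. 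A direct check shows that in both cases the obtuse edges meet all four faces, so neither is a height function. It remains to produce one height function tetrahedral subdivision in each of these two cases.

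The tools for this are Lemma \ref{TS} together with the bookkeeping of Section \ref{anglefunction}. Cut with $\Pi_p=\operatorname{aff}\{[v_2,v_3]\cup\{p\}\}$ for $p\in\operatorname{relint}([v_0,v_1])$, giving $T_0^p=\operatorname{conv}\{v_0,v_2,v_3,p\}$ and $T_1^p=\operatorname{conv}\{v_1,v_2,v_3,p\}$. The dihedral angles of the sub-tetrahedra are as follows. On $[v_0,v_2]$ and $[v_0,v_3]$ (and on $[v_1,v_2],[v_1,v_3]$) they equal those of $T$, since the incident faces are coplanar with faces of $T$. On the piece of $[v_0,v_1]$ they equal $\alpha_T([v_0,v_1])$. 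On $e=[v_2,v_3]$ they satisfy \eqref{ALG1}, and on the new edges they satisfy \eqref{ALG2}. By Lemma \ref{PROP1} and the monotonicity lemma, $p\mapsto\alpha_0(e,p)$ is continuous and increasing. Its one-sided limits at the endpoints are $0$ and $\alpha_T(e)$, and I would justify these through the degenerate members of the family.

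\textbf{Class 2(b).} Take $e=[v_2,v_3]$ and $e'=[v_0,v_1]$ to be the two obtuse edges. Since $\alpha_T(e)<\pi$, the interval $[\alpha_T(e)-\pi/2,\ \pi/2]\cap(0,\alpha_T(e))$ is nonempty. By the intermediate value theorem there is a $p$ with $\alpha_0(e,p)\le\pi/2$ and $\alpha_1(e,p)=\alpha_T(e)-\alpha_0(e,p)\le\pi/2$. Then the face $\operatorname{conv}\{v_0,v_2,v_3\}$ of $T_0^p$ has edges $[v_0,v_2],[v_0,v_3]$, which are non-obtuse as original edges, and the split edge $e$. Likewise the face $\operatorname{conv}\{v_1,v_2,v_3\}$ of $T_1^p$ has only non-obtuse edges. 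Lemma \ref{L1} then finishes this case.

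\textbf{Class 3(b).} Here I would use the same family through the obtuse edge $[v_2,v_3]$ with $p$ on the obtuse edge $[v_0,v_1]$. For $T_0^p$ the face $\operatorname{conv}\{v_0,v_2,v_3\}$ works exactly as before once $\alpha_0(e,p)\le\pi/2$. For $T_1^p$, every face other than the shared face $\operatorname{conv}\{v_2,v_3,p\}$ contains $[v_1,v_2]$ or $[v_1,p]$, and both are obtuse. So the shared face must be the foundation. This requires $\alpha_1(e,p)\le\pi/2$ and $\alpha_1(e_j(p),p)\le\pi/2$ on both new edges, which is equivalent to the foot of $v_1$ on $\Pi_p$ lying in $\operatorname{conv}\{v_2,v_3,p\}$.

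This is the main obstacle. The condition on $e$ pushes $p$ toward $v_1$, while the behaviour on the new edges near the two endpoints pulls in the other direction. My first attempt would be to use monotonicity of $\alpha_1(e_j(p),p)$ and compare threshold parameters: let $p^\ast$ be the value with $\alpha_1(e,p^\ast)=\pi/2$, and show that the new-edge angles are still $\le\pi/2$ at $p^\ast$ using the acuteness of $[v_0,v_2]$, $[v_0,v_3]$, $[v_1,v_3]$. If that fails, I would try the symmetric family through $[v_0,v_1]$ with $p\in[v_2,v_3]$, or argue directly via the foot-of-perpendicular picture.
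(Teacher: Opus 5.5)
\textbf{There is a genuine gap in Class 3(b).} Everything outside Class 3(b) is fine: the trivial classes and your Class 2(b) argument are correct and match the paper. In Class 3(b), however, the proof stops exactly where an argument is needed. You correctly show that the only possible foundations are $\operatorname{conv}\{v_0,v_2,v_3\}\subset T_0^p$ and the shared face $\operatorname{conv}\{v_2,v_3,p\}\subset T_1^p$. So you need a single $p$ with $\alpha_0([v_2,v_3],p)\le\pi/2$, $\alpha_1([v_2,v_3],p)\le\pi/2$, $\alpha_1([v_2,p],p)\le\pi/2$ and $\alpha_1([v_3,p],p)\le\pi/2$ holding simultaneously. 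You never show that such a $p$ exists; you only list strategies.

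\textbf{The missing idea} is to use Theorem~\ref{T1} as a counting device: every tetrahedron has at least three acute dihedral angles. With this, you never have to control the new-edge angles directly, and your own threshold already works. Take $p^\ast$ with $\alpha_1([v_2,v_3],p^\ast)=\pi/2$, which exists by the intermediate value theorem exactly as in Class 2(b).
\begin{itemize}
\item For $T_0^{p^\ast}$: $\alpha_0([v_2,v_3],p^\ast)=\alpha_T([v_2,v_3])-\pi/2<\pi/2$, so $\operatorname{conv}\{v_0,v_2,v_3\}$ is a foundation by Lemma~\ref{L1}.
\item For $T_1^{p^\ast}$: it already has three non-acute edges. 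These are $[v_1,v_2]$ and $[v_1,p^\ast]$, which are obtuse because their dihedral angles are inherited from $T$, and $[v_2,v_3]$, which is right. Theorem~\ref{T1} then forces $[v_1,v_3]$, $[v_2,p^\ast]$ and $[v_3,p^\ast]$ to be acute. Hence the shared face has no obtuse edge, and Lemma~\ref{L1} finishes the case.
\end{itemize}

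\textbf{Comparison with the paper.} The paper uses the same counting, but at a different parameter. It picks $r$ with $\alpha_0([v_3,r],r)=\alpha_1([v_3,r],r)=\pi/2$. It then shows that $\alpha_0([v_2,p],p)>\pi/2$ for all $p$, using $\alpha_T([v_1,v_2])>\pi/2$ and $\alpha_T([v_0,v_2])<\pi/2$ at the two ends. Finally it applies Theorem~\ref{T1} in both $T_0^r$ and $T_1^r$ to force $[v_2,v_3]$ to be acute in each piece. Your threshold $p^\ast$ is slightly more economical, since it needs no information about $[v_2,p]$ at all.

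\textbf{A caution about your first attempt.} Do not lean on monotonicity of the moving-edge functions $\alpha_i(e_j(p),p)$. It fails in general. Take $v_0=(1,-1,0)$, $v_1=(1,1,0)$, $v_2=0$, $v_3=(0,-1,1)$ and $p=(1,t,0)$ with $-1<t<1$. The dihedral angle of $T_0^p$ at $[v_2,p]$ has cosine of absolute value $1/\sqrt{2+t^2}$ and stays on one side of $\pi/2$, so it is not monotone in $t$. What does hold is weaker but sufficient where sign information is needed. Obtuseness of that angle is a linear condition in $p$: it says the foot of $v_3$ on $\operatorname{aff}\{v_0,v_1,v_2\}$ lies on the far side of the line through $v_2$ and $p$.
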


\begin{proof}
	
	As shown before, all tetrahedra fall within 4 distinct classes, some with multiple subclasses. As such, our strategy is to produce a proof for each class of tetrahedra. As it turns out, there are only 2 kinds of tetrahedra that require planar bisections to be decomposed into height functions, while all other kinds of tetrahedra are already height functions themselves. Let us first examine the fairly trivial cases of $T$ for which the initial configuration of $T$ is already a height function.
	
	\bigskip
	
	\noindent\textbf{Class $0$ tetrahedron:} Every Class 0 tetrahedron $T$ is a height function over all of its faces and no planar bisection is required. This result can be obtained by applying Lemma \ref{L1} to any face of $T$. 
	
	\bigskip
	
	\noindent\textbf{Class $1$ tetrahedron:} Let $e_0$ be the only edge with an obtuse dihedral angle. Now, consider the face $F$ with edges $e_1,e_2,e_3$ all not equal to $e_0$.\footnote{For this case, there exists two faces that satisfy this definition of $F$.} Because $e_0
	$ is the only edge with an obtuse dihedral angle, it must follow that $e_1,e_2,e_3$ all have non-obtuse dihedral angles. Then by Lemma \ref{L1}, $T$ is a height function relative to $F$, and no planar bisection is required. 
	
	\bigskip
	
	\noindent\textbf{Class 2 (a) tetrahedron:} Let $e_0$ and $e_1$ be the only edges of $T$ with obtuse dihedral angles. Since $T$ is of Class 2 (a), $e_0$ and $e_1$ share a common vertex, without loss of generality we can assume this vertex is $v_0$. Then consider the face $F=\operatorname{conv}\{v_1,v_2,v_3\}$ formed by the remaining three vertices. None of the three edges of $F$ contain $v_0$, hence every edge of $F$ must have a non-obtuse dihedral angle. Applying Lemma \ref{L1} shows that $F$ is a foundation. Therefore, $T$ is a height function and no planar bisection is required. 
	
	\bigskip
	
	\noindent\textbf{Class 3 (a) tetrahedron:} Let $e_0,e_1,e_2$ be the only 3 edges with obtuse dihedral angles. Since $T$ is of Class 3 (a), it must be the case that all these edges meet at a single vertex, after possibly relabeling, assume that this vertex is $v_0$. Then consider the face $F = \operatorname{conv}\{v_1,v_2,v_3\}$ formed by the remaining three vertices. None of the three edges of $F$ contain $v_0$, hence every edge of $F$ must have a non-obtuse dihedral angle. Applying Lemma \ref{L1} shows that $F$ is a foundation. Therefore, $T$ is a height function and no planar bisection is required. 
	
	\bigskip
	
	We now turn to the nontrivial cases, both of which require a planar subdivision. We first introduce a common geometric construction for $T$, and then specialize this construction according to the hypotheses of each subclass. Let $\{\Pi_p\}_{p\in [v_0,v_1]}$ be the one parameter family of affine planes that induces the family of tetrahedral subdivisions $\{T_0^p,T_1^p\}_{p \in[v_0,v_1]}$ on $T$. By construction, each sub-tetrahedron can be described by
	\begin{equation*}
		T_0^p = \operatorname{conv}\{v_0,v_2,v_3,p\}, \quad T_1^p = \operatorname{conv}\{v_1,v_2,v_3,p\}
	\end{equation*}
	and likewise, each shared face can be described by
	\begin{equation*}
		T \cap \Pi_p = \operatorname{conv} \{v_2,v_3,p\} \implies \bigcap_{p \in [v_0,v_1]} (T\cap \Pi_p) = [v_2,v_3]
	\end{equation*}
	Furthermore, the edges that are determined by $p$ are $[v_2,p]$ and $[v_3,p]$.
	
	\bigskip

	\noindent\textbf{Class 2 (b) tetrahedron:} Assume $T$ is a Class 2 Sublcass (b) tetrahedron. Notice that all tetrahedra of Class 2 (b) are configured so that the edges with obtuse dihedral angles are opposite from one another. Without loss of generality, assume $[v_0,v_1], [v_2,v_3]$ are the two opposite edges with obtuse dihedral angles. The arrangement of these edges requires that every face of $T$ contains at lease one of these edges. Since these edges have obtuse dihedral angles, Lemma \ref{L1} shows that no face of $T$ is a foundation, hence no Class 2 (b) tetrahedron is a height function. 
	
	Proceeding with the argument, using the identity $T_0^{v_1}=T_1^{v_0} = T$, we immediately deduce that
	\begin{equation*}
		\alpha_0([v_2,v_3],v_1) = \alpha_T([v_2,v_3]) > \frac{\pi}{2}, \quad \alpha_1([v_2,v_3],v_0) = \alpha_T([v_2,v_3]) > \frac{\pi}{2}
	\end{equation*}
	Manipulating Equation \ref{ALG2} relative to $\alpha_0([v_2,v_3],p)$ yields
	\begin{equation*}
		\alpha_0([v_2,v_3],p) = \alpha_T([v_2,v_3])-\alpha_1([v_2,v_3),p), \quad \forall p \in \operatorname{relint}([v_0,v_1])
	\end{equation*}
	Since $T_0^p$ degenerates at $v_0$, the function $\alpha_0([v_2,v_3],p)$ is undefined at $v_0$. Then, using the right hand limit, we find that
	\begin{align*}
		\lim_{p\to v_0^+}\alpha_0([v_2,v_3],p)
		&=\lim_{p\to v_0^+}
		\bigl(\alpha_T([v_2,v_3])-\alpha_1([v_2,v_3],p)\bigr)\\
		&=0<\frac{\pi}{2}.
	\end{align*}
	
	\begin{figure}
		\centering
		\includegraphics[scale=.83]{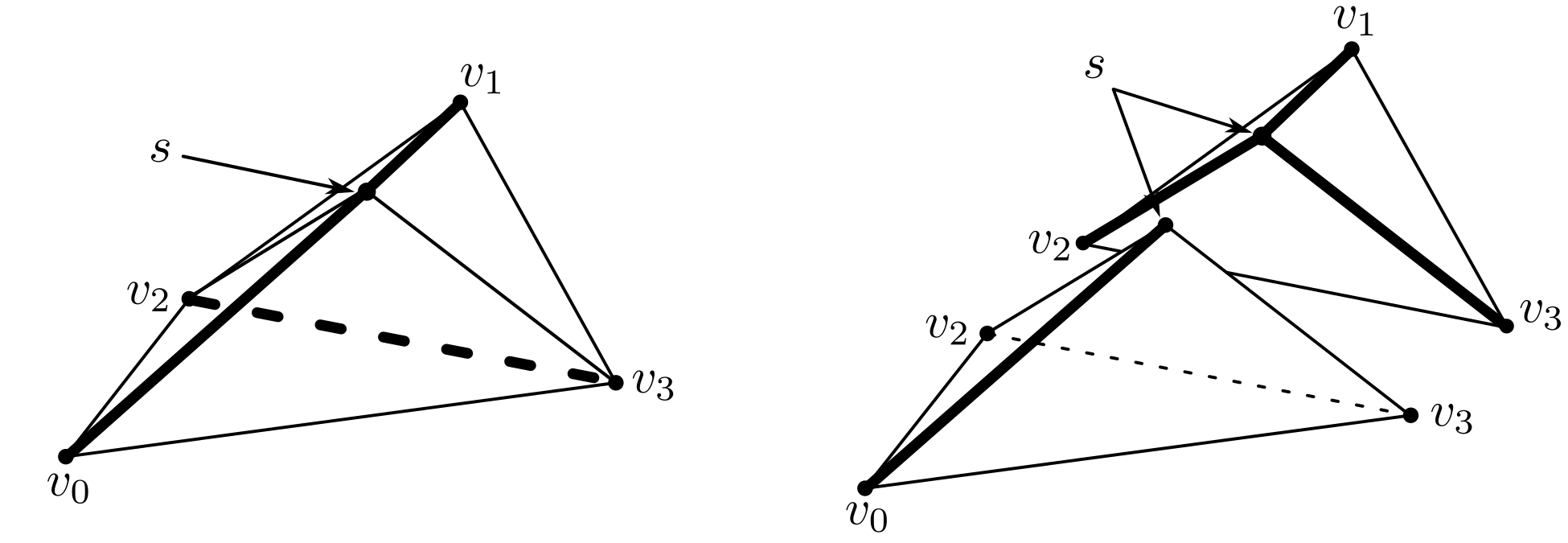}
		\caption{Height function tetrahedral subdivision of a Class 2 (b) tetrahedron with obtuse dihedral angles indicated by thickened lines (\textit{Left}) Original tetrahedron with proposed subdivision using point $s \in \operatorname{relint}([v_0,v_1])$.(\textit{Right}) Two height function tetrahedra separated after subdivision.}
		\label{S2}
	\end{figure}
	Then, by continuity of $\alpha_0([v_2,v_3],p)$ along with the Intermediate Value Theorem yields some $s \in \operatorname{relint}([v_0,v_1])$ such that $\alpha_0([v_2,v_3],s) = \frac{\pi}{2}$. Then once more, by the dihedral angle function relationship,
	\begin{align*}
		\alpha_1([v_2,v_3],s)
		&=\alpha_T([v_2,v_3])-\alpha_0([v_2,v_3],s)\\
		&=\alpha_T([v_2,v_3])-\frac{\pi}{2}
		<\frac{\pi}{2}.
	\end{align*}
	Then let us consider the affine plane $\Pi_s = \operatorname{aff}\{v_2,v_3,s\} \in \{\Pi_p\}_{p \in [v_0,v_1]}$ and its associated tetrahedral subdivision $T = T_0^s\cup T_1^s$. By construction, the two sub tetrahedron are
	\begin{equation*}
		T_0^s = \operatorname{conv}\{v_0,v_2,v_3,s\}, \quad T_1^s = \operatorname{conv}\{v_1,v_2,v_3,s\}
	\end{equation*}
	and the face they share is
	\begin{equation*}
		T_0^s \cap T_1^s = \operatorname{conv}\{v_2,v_3,s\}
	\end{equation*}
	First, the face $F = \operatorname{conv}\{v_0,v_2,v_3\} \subset T_0^s$ has edges $[v_0,v_2], [v_0,v_3]$ and $[v_2,v_3]$. By the choice of $s$ we have $\alpha_0([v_2,v_3],s) = \frac{\pi}{2}$. Notice that the dihedral angles of other 2 edges remain unchanged by the decomposition, and, by hypothesis, have non-obtuse dihedral angles in $T_0^s$ inherited by $T$. An application of Lemma \ref{L1} shows that F is a foundation of $T_0^s$. Next, consider the face $K = \operatorname{conv}\{v_1,v_2,v_3\}\subset T_1^s$ with edges $[v_1,v_2],[v_1,v_3]$ and $[v_2,v_3]$. We know that $\alpha_1([v_2,v_3],s) <\frac{\pi}{2}$ and the dihedral angles of the other 2 edges are unchanged, so they must have non-obtuse dihedral angles in $T_1^s$. Thus, by Lemma \ref{L1}, $K$ is a foundation for $T_1^s$. Therefore, $T_0^s$ and $T_1^s$ are both height function tetrahedra relative to their respective foundation faces.

	\bigskip
	
	\noindent\textbf{Class 3 (b) tetrahedron:} Assume $T$ is a Class 3 (b) tetrahedra. The configuration constitutes that $T$ has 3 edges with obtuse dihedral angles that do not meet at a common vertex. Without loss of generality, we may assume that $[v_0,v_1], [v_1,v_2]$ and $[v_2,v_3]$ are the edges where obtuse dihedral angles occur. By Theorem \ref{T1}, the remaining edges $[v_0,v_2],[v_0,v_3],[v_1,v_3]$ all have acute dihedral angles.

	To begin, we investigate the behavior of the dihedral angle functions $\alpha_0([v_2,p],p)$ and $\alpha_1([v_2,p],p)$ with respect to the edge $[v_2,p]$ determined as $p$ varies. We claim that, for all $p \in \operatorname{relint}([v_0,v_1])$ the angle function, $\alpha_0([v_2,p],p)$ satisfies
	\begin{equation*}
		\alpha_0([v_2,p],p) > \frac{\pi}{2}
	\end{equation*}
	and subsequently, for all $p\in \operatorname{relint}([v_0,v_1])$
	\begin{equation*}
		\alpha_1([v_2,p],p) < \frac{\pi}{2}
	\end{equation*}
	We start by realizing that the identity $T_0^{v_1} = T_1^{v_0}=T$ allows us to deduce that
	\begin{equation*}
		\alpha_0([v_2,v_1],v_1) = \alpha_T([v_1,v_2]) >\frac{\pi}{2},\quad \alpha_1([v_2,v_0],v_0) = \alpha_T([v_0,v_2]) <\frac{\pi}{2}
	\end{equation*}
	By a slight manipulation of Equation \ref{ALG2} for $\alpha_0([v_2,p],p)$, we have that
	\begin{equation*}
		\alpha_0([v_2,p],p) = \pi- \alpha_1([v_2,p],p), \quad \forall p \in \operatorname{relint}([v_0,v_1])
	\end{equation*}
	Then, since $T_0^p$ degenerates at $v_0$, the function $\alpha_0([v_2,p],p)$ is undefined at $v_0$. Using the right hand limit and the fact that $\alpha_T([v_0,v_2]) < \frac{\pi}{2}$, we have that
	\begin{align*}
		\lim_{p\to v_0^+}\alpha_0([v_2,p],p) &=\lim_{p \to v_0^+}( \pi - \alpha_1([v_2,p],p]) \\
		&= \pi - \alpha_T([v_0,v_2]) > \frac{\pi}{2}
	\end{align*}
	Thus, $\alpha_0([v_2,p],p)$ is greater than $\frac{\pi}{2}$ at the both vertex endpoints $v_0$ and $v_1$. Then, because $\alpha_0([v_2,p],p)$ is monotonic on $\operatorname{relint}([v_0,v_1])$, we have that 
	\begin{equation*}
		\alpha_0([v_2,p],p) > \frac{\pi}{2} \quad \forall p \in \operatorname{relint}([v_0,v_1])
	\end{equation*}
	\begin{figure}
		\centering
		\includegraphics[scale=.83]{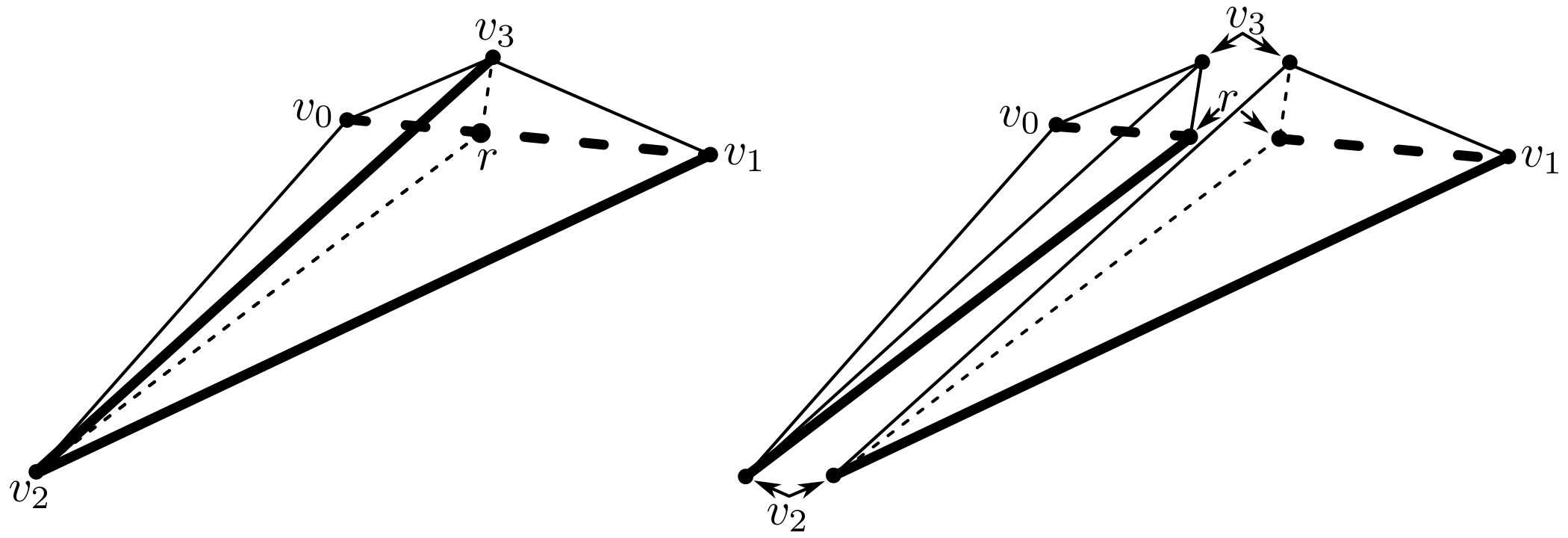}
		\caption{Height function tetrahedral subdivision of a Class 3 (b) tetrahedron with obtuse dihedral angles indicated by thickened lines (\textit{Left}) Original tetrahedron with proposed subdivision using point $r \in \operatorname{relint}([v_0,v_1])$.(\textit{Right}) Two height function tetrahedra separated after subdivision.}
		\label{S3}
	\end{figure}
	and subsequently, by the supplementary angle relationship,
	\begin{equation*}
		\alpha_1([v_2,p],p) < \frac{\pi}{2} \quad \forall p \in \operatorname{relint}([v_0,v_1])
	\end{equation*}
	Now, we examine the behavior of the angle functions $\alpha_0([v_3,p],p)$ and $\alpha_1([v_3,p],p)$ with respect to the edge $[v_3,p]$. Again, using the identity $T_0^{v_1}=T_1^{v_0} = T$, we deduce that
	\begin{equation*}
		\alpha_0([v_3,v_1],v_1) = \alpha_T([v_1,v_3])< \frac{\pi}{2}, \quad \alpha_1([v_3,v_0],v_0) = \alpha_T([v_0,v_3]) <\frac{\pi}{2}
	\end{equation*}
	Once more, a slight manipulation of Equation \ref{ALG2} relative to $\alpha_0([v_3,p],p)$ yields
	\begin{equation*}
		\alpha_0([v_3,p],p) = \pi - \alpha_1([v_3,p],p) \quad \forall p \in \operatorname{relint}([v_0,v_1]
	\end{equation*}
	Then, since $T_0^p$ degenerates at $v_0$, the function $\alpha_0([v_3,p],p)$ is undefined at $v_0$. Then using the right hand limit yields
	\begin{align*}
		\lim_{p\to v_0^+}\alpha_0([v_3,p],p) &= \lim_{p\to v_0^+}(\pi - \alpha_1([v_3,p],p)) \\ &= \pi - \alpha_T([v_0,v_3]) > \frac{\pi}{2}
	\end{align*}
	Now, since $\alpha_0([v_3,p],p)$ is continuous on $\operatorname{relint}([v_0,v_1])$, the Intermediate Value Theorem shows that there exists some $r\in \operatorname{relint}([v_0,v_1])$ such that 
	\begin{equation*}
		\alpha_0([v_3,r],r) = \frac{\pi}{2} \quad \text{ and } \quad \alpha_1([v_3,r],r) = \frac{\pi}{2}
	\end{equation*}
	Fix $r \in \operatorname{relint}([v_0,v_1])$ and define the affine plane $\Pi_r = \operatorname{aff}\{v_2,v_3,r\} \in \{\Pi_p\}_{p \in [v_0,v_1]}$ and the two tetrahedron $T_0^r ,T_1^r \in \{T_0^p,T_1^P\}_{p \in [v_0,v_1]}$ produced by the subdivision by $\Pi_r$. By construction,
	\begin{equation*}
		T_0^r = \operatorname{conv}\{v_0,v_2,v_3,r\}, \quad T_1^r = \operatorname{conv}\{v_1,v_2,v_3,r\}, \quad
		T_0^r \cap T_1^r = \operatorname{conv}\{v_2,v_3,r\}
	\end{equation*}
	We claim that $F=\operatorname{conv} \{v_0,v_2,v_3\} \subset T_0^r$ and $K=\operatorname{conv}\{v_2,v_3,r\}\subset T_1^r$ are both foundations for their respective tetrahedron. Beginning with $F$, we consider each edge $[v_0,v_2],$ $[v_0,v_3]$, and $[v_2,v_3]$ of $F$. By hypothesis, the dihedral angles that occur at $[v_0,v_2]$ and $[v_0,v_3]$ are acute, and remain unchanged by the subdivision. Thus, it remains to check the dihedral angle at $[v_2,v_3]$ is non-obtuse. To verify that the dihedral angle at $[v_2,v_3]$ is in fact non-obtuse, we'll use Theorem \ref{T1} and the proven behavior of the angle function $\alpha_0$ with respect to $[v_2,r],[v_3,r] \in E(T_0^r)$. By our selection of $r \in \operatorname{relint}([v_0,v_1]$ we have that
	\begin{equation*}
		\alpha_0([v_2,r],r)>\frac{\pi}{2} \quad \text{ and } \quad \alpha_0([v_3,r],r) = \frac{\pi}{2}
	\end{equation*}
	Since the edge $[v_0,r] \in E(T_0^r)$ inherits the obtuse dihedral angle of $[v_0,v_1]$, the only acute dihedral angles of $T_0^r$ occur at the edges $[v_0,v_2]$ and $[v_0,v_3]$. Thus, since all other edges of $T_0^r$ are non-acute, by Theorem \ref{T1}, the dihedral angle at $[v_2,v_3]$ must be acute. A direct application of Lemma \ref{L1} shows that $F$ is a foundation of $T_0^r$, and therefore, $T_0^r$ is a height function tetrahedron.

	Next, we show $T_1^r$ is a height function.  Consider the edges $[v_1,v_2], [v_1,r],[v_3,r]\in E(T_1^r)$. By our choice of $r$ we have $\alpha_1([v_3,r],r) = \frac{\pi}{2}$. Moreover, the other 2 edges inherit obtuse angles from $T$. Hence, all three edges have non-acute dihedral angles, so by Theorem \ref{T1} we know the 3 remaining edges of $T_1^r$ including $[v_2,r]$ and $[v_2,v_3]$ must have acute dihedral angles in $T_1^r$. Moreover, $[v_3,r]$ is non-obtuse, and thus an application of Lemma \ref{L1} to the face $K=\operatorname{conv}\{v_2,v_3,r\}\subset T_1^r$ shows that $K$ is a foundation and $T_1^r$ is a height function. Therefore $T_0^r$ and $T_1^r$ are both height functions.
\end{proof}

While developing a proof for Theorem \ref{THM2}, a primary motivator for the selection of edges to perform a planar bisection along was based on the need to reduce the number of obtuse dihedral angles present with the expectation of then being able to directly apply Lemma \ref{L1} to the subsequent sub-tetrahedra. Observe that, for both Class 3 (b) and Class 2 (b), the affine plane used to induce a tetrahedral subdivision contains an edge whose dihedral angle is obtuse, and passes through a single point contained within the opposite edge whose dihedral angle is also obtuse. Naturally, one may immediately question whether such a phenomenon is purely coincidence or rather a result of underlying structure. After investigating other potential cuts, we find that the latter is in fact true.

\begin{thm}\label{UNIQ}
	Let $T$ be a tetrahedron of Class $n \ge 2$, (b). If an affine plane $\Pi$ induces a height function tetrahedral subdivision of $T$, then the edge $e$ contained in $\Pi$ and the opposite edge $e'$ intersected by $\Pi$ at a point $p \in \operatorname{relint}(e')$ must both have obtuse dihedral angles.
\end{thm}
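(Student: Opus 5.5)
We argue by contraposition. Fix an affine plane $\Pi$ inducing a tetrahedral subdivision; by Lemma \ref{TS} it contains an edge $e$ and meets the opposite edge $e'$ at a point $p\in\operatorname{relint}(e')$. After relabeling we take $e=[v_2,v_3]$, $e'=[v_0,v_1]$, $T_0=\operatorname{conv}\{v_0,v_2,v_3,p\}$ and $T_1=\operatorname{conv}\{v_1,v_2,v_3,p\}$. We then show that whenever $e$ or $e'$ has a non-obtuse dihedral angle in $T$, at least one of $T_0,T_1$ fails to be a height function. This reduces everything to dihedral-angle bookkeeping once height functions are made combinatorial.

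\textbf{Criterion.} Lemma \ref{L1} says a tetrahedron is a height function iff some face has no obtuse edge. Equivalently, its set of obtuse edges avoids some face, i.e.\ all obtuse edges are incident to one common vertex (the vertex opposite that face). For a tetrahedron with at most one obtuse edge this always holds. So we only have to exhibit, in some $T_i$, two obtuse edges with no common vertex.

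\textbf{Bookkeeping.} The six edges of $T_i$ fall into three kinds.
\begin{itemize}
\item The two halves of $e'$, namely $[v_0,p]\subset T_0$ and $[v_1,p]\subset T_1$, keep the angle $\alpha_T(e')$, since their incident faces lie in the same planes as before.
\item The four edges $[v_a,v_b]$ with $a\in\{0,1\}$, $b\in\{2,3\}$ each lie in exactly one $T_i$ and keep their angle in $T$.
\item The edge $e$ has $\alpha_0(e)+\alpha_1(e)=\alpha_T(e)$ by Equation \ref{FIXEDFUNC1}. The new edges $[v_2,p],[v_3,p]$ satisfy the supplementary relations of Equation \ref{FIXEDFUNC2}.
\end{itemize}

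\textbf{Class 2 (b).} The obtuse edges are an opposite pair $\{f,f'\}$. If $\{e,e'\}\neq\{f,f'\}$, then $f$ and $f'$ are both among the four transferred edges, say $f=[v_0,v_2]$ and $f'=[v_1,v_3]$ (or the symmetric choice). Then $T_0$ contains the obtuse edge $[v_0,v_2]$, and its faces avoiding it are $\operatorname{conv}\{v_0,p,v_3\}$ and $\operatorname{conv}\{p,v_2,v_3\}$, which share the edge $[p,v_3]$.
\begin{itemize}
\item If $\alpha_0([p,v_3])>\pi/2$, then $[p,v_3]$ and $[v_0,v_2]$ are disjoint obtuse edges of $T_0$, so $T_0$ fails.
\item Otherwise $\alpha_1([p,v_3])\ge\pi/2$. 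If this is strict, the same argument in $T_1$ with $[v_1,v_3]$ fails, because $[p,v_3]$ meets $[v_1,v_3]$ only at $v_3$. So we instead look at the remaining foundations of $T_1$. Those avoiding $[v_1,v_3]$ are $\operatorname{conv}\{v_1,p,v_2\}$ and $\operatorname{conv}\{p,v_2,v_3\}$, which forces $[p,v_2]$ to be non-obtuse in $T_1$, hence non-acute in $T_0$.
\end{itemize}
Combining these constraints with Theorem \ref{T1} applied to $T_0$ and $T_1$ separately (each has at least three acute angles) should rule out every admissible assignment.

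\textbf{Class 3 (b).} The obtuse edges form the path $[v_0,v_1],[v_1,v_2],[v_2,v_3]$. Besides the good pair $\{[v_0,v_1],[v_2,v_3]\}$ there are two bad opposite pairs: $\{[v_1,v_2],[v_0,v_3]\}$ (one obtuse) and $\{[v_0,v_2],[v_1,v_3]\}$ (neither obtuse). In each case, relabel so that $e'$ is the cut edge and list which obtuse edges land in which $T_i$. For example, cutting $[v_0,v_3]$ while containing $[v_1,v_2]$ sends the obtuse edges $[v_0,v_1]$ and $[v_2,v_3]$ into different pieces, while the halves of the cut edge inherit an acute angle. We then repeat the face-avoidance argument above, using the supplementary relations at the two new edges to force one piece to contain two vertex-disjoint obtuse edges.

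\textbf{Main obstacle.} The hard step is controlling the new edges $[v_2,p],[v_3,p]$ and the split edge $e$, whose angles depend on $p$. A non-obtuse choice in one piece can still be a right angle and so be harmless in the other. We would first try to exclude this using the monotonicity lemma together with the endpoint limits from the proof of Theorem \ref{THM2}, which pin down where $\alpha_i([v_b,p],p)$ crosses $\pi/2$. Failing that, we would use Theorem \ref{T1} directly on each $T_i$. Borderline right-angle configurations need separate care and may force the statement to be read with ``non-acute'' in place of ``obtuse''.
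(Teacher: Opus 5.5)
Your contrapositive set-up is correct and is the same skeleton the paper uses, and so are the reformulation of Lemma~\ref{L1} (a piece fails as soon as it contains two opposite obtuse edges) and the bookkeeping. In Class 2 (b) you have even found the right pair, $[v_0,v_2]$ and $[v_3,p]$ in $T_0$. The gap is that the proof stops at the decisive step. You never show that your first bullet is the only possibility, i.e.\ that $\alpha_0([v_3,p],p)>\pi/2$ for \emph{every} $p\in\operatorname{relint}([v_0,v_1])$.

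The paper gets this from endpoint data. At $p=v_1$ one has $T_0^{v_1}=T$, so the value is $\alpha_T([v_1,v_3])>\pi/2$. As $p\to v_0$, the supplementary relation gives $\alpha_0([v_3,p],p)=\pi-\alpha_1([v_3,p],p)\to\pi-\alpha_T([v_0,v_3])\ge\pi/2$. Together with the fact that the angle cannot drop back to $\pi/2$ in between (the paper cites its monotonicity lemma), this makes your second bullet vacuous.

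Your proposed substitute, counting with Theorem~\ref{T1}, cannot do this job. In your ``otherwise'' branch, all obtuse edges of $T_0$ can be incident to $v_2$ (namely $[v_0,v_2]$ and possibly $[v_2,p]$), and all obtuse edges of $T_1$ can be incident to $v_3$ (namely $[v_1,v_3]$ and possibly $[v_3,p]$). Each piece still has three acute edges available, so no combinatorial contradiction exists; only geometric input excludes that branch.

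The same problem affects your Class 3 (b) sketch. The supplementary relations tell you that a new edge is non-acute in \emph{some} piece. They do not tell you that it is obtuse in the piece where it sits opposite an inherited obtuse edge. What works in each of the three bad configurations is the endpoint argument applied to that new edge. In your own example (plane through $[v_1,v_2]$ meeting $[v_0,v_3]$ at $p$), the edge $[v_2,p]$ of $\operatorname{conv}\{v_0,v_1,v_2,p\}$ has angle $\alpha_T([v_2,v_3])>\pi/2$ at $p=v_3$. As $p\to v_0$ its angle tends to $\pi-\alpha_T([v_0,v_2])>\pi/2$. Hence it is obtuse throughout, and it is opposite the obtuse edge $[v_0,v_1]$.

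Two further points. First, your worry about right angles is unfounded. One endpoint value is strictly obtuse, and strictness propagates to every interior $p$ even when the other endpoint limit equals $\pi/2$, so the statement holds as written with ``obtuse''.

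Second, do not lean on full monotonicity of moving-edge angles, which fails in general. For $v_2=(0,0,0)$, $v_0=(1,1,0)$, $v_1=(-1,1,0)$, $v_3=(1,0,1)$ and $p=(x,1,0)$, one computes $\cos\alpha_0([v_2,p],p)=1/\sqrt{2+x^2}$, which is even in $x$. All you need is that the angle cannot cross back over $\pi/2$, and here this has a direct proof via the computation in Lemma~\ref{L1}. Let $h$ be the orthogonal projection of $v_2$ onto $\operatorname{aff}\{v_0,v_1,v_3\}$. The angle of $T_0$ at $[v_3,p]$ is obtuse if and only if $h$ lies strictly on the $v_1$-side of the line through $v_3$ and $p$. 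The endpoint conditions say that $h$ lies strictly on the side of the line $v_3v_1$ away from $v_0$, and weakly on the $v_1$-side of the line $v_3v_0$. The line $v_3p$ turns about $v_3$ from $v_3v_0$ to $v_3v_1$ through an angle less than $\pi$, so such an $h$ stays strictly on the $v_1$-side of every intermediate line.
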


Prior to reading the following proof, we recommend that the reader revisit Lemma \ref{L1}, Theorem \ref{T1}, and Lemma \ref{TS}. 

\begin{proof}
	
	\textbf{Class 2 (b):} Let $T=\operatorname{conv}\{v_0,v_1,v_2,v_3\}$ be a Class 2 Sublcass (b) tetrahedron and define the family of affine planes $\{\Pi_p\}_{p \in [v_0,v_1]}$ on such that each planes $\Pi_p$ contains the point $p$ and the edge $[v_2,v_3]$. Suppose for contradiction that the dihedral angles measured at $[v_0,v_1]$ and $[v_2,v_3]$ are not both obtuse. Then at least $[v_0,v_1]$ or $[v_2,v_3]$ has a non-obtuse dihedral angle. However, observe that, because $T$ is a Class 2 (b) tetrahedron, the only obtuse dihedral angles of $T$ must occur at edges opposite to each other, thus forcing both dihedral angles measured at $[v_0,v_1]$ and $[v_2,v_3]$ to both be non-obtuse. Then without loss of generality, assume the dihedral angles measured at $[v_0,v_2]$ and $[v_1,v_3]$ are obtuse as selecting the other pair of edges follows an identical argument. 
	
	First, by the identity $T_0^{v_1}=T_1^{v_0}=T$, we have that
	\begin{equation*}
		\alpha_0([v_3,v_1],v_1) = \alpha_T([v_3,v_1])> \frac{\pi}{2}, \quad \alpha_1([v_3,v_0],v_0) = \alpha_T([v_0,v_3]) \le \frac{\pi}{2}
	\end{equation*}
	Since $\alpha_0([v_3,p],p)$ and $\alpha_1([v_3,p],p)$ share a supplementary relationship, and $\alpha_0([v_3,p],p)$ is undefined at $v_0$, we use the right hand limit to deduce that
	\begin{align*}
		\lim_{p \to v_0^+}\alpha_0([v_3,p],p)&= \lim_{p \to v_0^+}(\pi-\alpha_1([v_3,p],p) \\
		&= \pi - \alpha_T([v_0,v_3])\ge\frac{\pi}{2}
	\end{align*}
	Because $\alpha_0([v_3,p],p)$ is monotonic on $p \in \operatorname{relint}([v_0,v_1])$, it follows that
	\begin{equation*}
		\alpha_0([v_3,p],p) > \frac{\pi}{2}, \quad \forall p \in \operatorname{relint}([v_0,v_1])
	\end{equation*}
	Now, fix $l \in \operatorname{relint}([v_0,v_1])$, and consider the plane $\Pi_l = \operatorname{conv}\{v_2,v_3,l\} \in \{\Pi_p\}_{p \in[v_0,v_1]}$ that induces the subdivision
	\begin{equation*}
		T_0^l = \operatorname{conv}\{v_0,v_2,v_3,l\}, \quad T_1^l = \operatorname{conv}\{v_1,v_2,v_3,l\}, \quad T_0^l \cap T_1^l = \operatorname{conv}\{v_2,v_3,l\}
	\end{equation*}
	Observe that, for any $p \in \operatorname{relint}([v_0,v_1])$, the obtuse dihedral angle measured at the edge $[v_0,v_2] \subset T_0^l$ is unchanged by the decomposition, and thus, remains obtuse in $T_0^l$. Then since $[v_0,v_2]$ and $[v_3,l]$ are opposite to each other and both have obtuse dihedral angles in $T_0^l$, it must follow that $T_0^l$ is of Class 2 (b). As proven before in Theorem \ref{THM2}, $T_0^l$ cannot be a height function, a contradiction to our assumption. Thus, $\alpha_T([v_0,v_1])$ and $\alpha_T([v_2,v_3])$ must be obtuse.
	
	\bigskip
	
	\noindent\textbf{Class 3 (b):} Assume $T$ is a Class 3 (b) tetrahedron. Let $q\in\operatorname{relint}([v_0,v_1])$ be such that the affine plane $\Pi_q$ induces the height function tetrahedral subdivision
	\begin{equation*}
		T_0^q=\operatorname{conv}\{v_0,v_2,v_3,q\},\qquad
		T_1^q=\operatorname{conv}\{v_1,v_2,v_3,q\}.
	\end{equation*}
	To derive a contradiction, we analyze the one-parameter family of subdivisions $T_1^q,T_0^q \in \{T_0^p,T_1^p\}_{p \in [v_0,v_1]}$ induced by the family of planes $\Pi_q \in \{\Pi_{p}\}_{p \in [v_0,v_1]}$. We consider 3 arguments based on the arrangements of obtuse dihedral angles seen in Figure \ref{fig:CLASS3BK4}.
	
	\begin{figure}[h!]
		\centering
		\includegraphics[scale=.90]{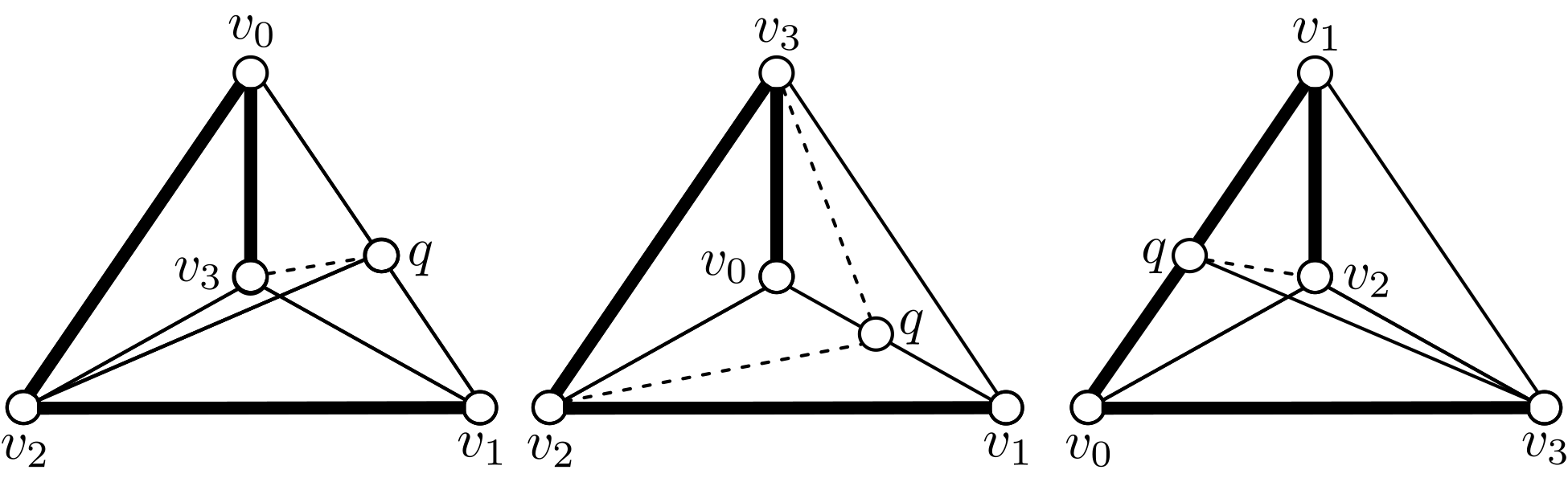}
		\caption{$K_4$ representations of Class 3 (b) tetrahedra not where opposite edges are not both obtuse.}
		\label{fig:CLASS3BK4}
	\end{figure}

	\bigskip
	
	\noindent\textbf{Case 1:} Without loss of generality, assume for purpose of contradiction that $[v_1,v_2]$, $[v_2,v_3]$ and $[v_0,v_3]$ have obtuse dihedral angles. Then by Theorem \ref{T1}, the remaining edges $[v_0,v_2]$, $[v_1,v_3],$ and $[v_0,v_1]$ must have acute dihedral angles. Now, consider the edge $[v_3,p]$ formed by an arbitrary subdivision of $T$. By the identity $T_0^{v_1}=T_1^{v_0}=T$, we have that
	\begin{equation*}
		\alpha_0([v_3,v_1],v_1) = \alpha_T([v_1,v_3]) < \frac{\pi}{2}, \quad \alpha_1([v_3,v_0],v_0) = \alpha_T([v_0,v_3]) > \frac{\pi}{2}
	\end{equation*}
	Using Equation \ref{ALG2} and the left hand limit, we have that
	\begin{align*}
		\lim_{p \to v_1^-} \alpha_1([v_3,p],p) &= \lim_{p \to v_1^-}(\pi - \alpha_0([v_3,p],p) \\
		&= \pi - \alpha_T([v_1,v_3]) > \frac{\pi}{2}
	\end{align*}

	Since $\alpha_1([v_3,p],p)$ is monotonic on $\operatorname{relint}([v_0,v_1])$, it must follow that 
	\begin{equation*}
		\alpha_1([v_3,p],p) > \frac{\pi}{2},\quad \forall p \in \operatorname{relint}([v_0,v_1])
	\end{equation*}
	Taking $p =q \in \operatorname{relint}([v_0,v_1])$, since $[v_1,v_2]$, by hypothesis, has an obtuse dihedral angle, $[v_3,q]$ always has an obtuse dihedral angle in $T_1^q$, and $[v_1,v_2]$ and $[v_3,p]$ are opposite, $T_1^q$ must be of Class 2 (b). Therefore, $T_1^q$ is not a height function, a contradiction.
	
	\bigskip
	
	\noindent\textbf{Case 2:} Assume for purpose of contradiction that $[v_1,v_2]$,$[v_0,v_2]$, and $[v_0,v_3]$ have obtuse dihedral angles. By Theorem \ref{T1}, the remaining edges $[v_2,v_3]$, $[v_0,v_1]$ and $[v_1,v_3]$ all have acute dihedral angles. consider the edge $[v_3,p]$ formed by an arbitrary subdivision of $T$. Consider the edge $[v_3,p]$. Notice that, by the identity $T_0^{v_1}=T_1^{v_0} = T$, we have that 
	\begin{equation*}
		\alpha_0([v_3,v_1],v_1) = \alpha_T([v_1,v_3]) < \frac{\pi}{2}, \quad \alpha_1([v_3,v_0],v_0) = \alpha_T([v_0,v_3]) > \frac{\pi}{2}
	\end{equation*}
	This configuration is identical to Case 1, allowing us to conclude that
	\begin{equation*}
		\alpha_1([v_3,p],p) > \frac{\pi}{2},\quad \forall p \in \operatorname{relint}([v_0,v_1])
	\end{equation*}
	Once more, taking $p=q \in \operatorname{relint}([v_0,v_1])$, since $[v_1,v_2]$, by hypothesis, has an obtuse dihedral angle, $[v_3,q]$ always has an obtuse dihedral angle in $T_1^q$, and $[v_1,v_2]$ and $[v_3,q]$ are opposite, $T_1^q$ must be of Class 2 (b). Therefore, $T_1^q$ is not a height function, a contradiction.
	
	\begin{remark}
		The argument for the plane that contains the edge $[v_0,v_1]$ and intersects $q\in \operatorname{relint}([v_2,v_3])$ is identical via symmetry.
	\end{remark}
	
	\bigskip
	
	\noindent\textbf{Case 3:} Assume for purpose of contradiction that the edges $[v_0,v_1],[v_1,v_2]$ and $[v_0,v_3]$ have obtuse dihedral angles. By Theorem \ref{T1}, the remaining edges $[v_0,v_2],[v_2,v_3]$ and $[v_1,v_3]$ all have acute dihedral angles. Consider the edge $[v_3,p] \in E(T_0^p) \cap E(T_1^p)$. Notice one more, by the identity $T_0^{v_1} = T_1^{v_0} = T$, we have that
	\begin{equation*}
		\alpha_0([v_3,v_1],v_1) = \alpha_T([v_1,v_3]) < \frac{\pi}{2}, \quad \alpha_1([v_3,v_0],v_0) = \alpha_T([v_0,v_3]) > \frac{\pi}{2}
	\end{equation*}
	Thus, we have an identical configuration to Case 1 and Case 2, allowing us to conclude that
	\begin{equation*}
		\alpha_1([v_3,p],p) > \frac{\pi}{2},\quad \forall p \in \operatorname{relint}([v_0,v_1])
	\end{equation*}
	Fix $p=q \in \operatorname{relint}([v_0,v_1])$. Consider the tetrahedron $T_1^q = \operatorname{conv}\{v_1,v_2,v_3,q\}$ formed by the subdivision. Since $[v_1,v_2]$ and $[v_1,q] \subset[v_0,v_1]$ have obtuse dihedral angles unchanged by the subdivision of $T$, and $[v_3,q]$ always has an obtuse dihedral angle in $T_1^q$, we have that $[v_1,v_2],[v_1,q],$ and $[v_3,q]$ form a path of length 3. Thus, $T_1^q$ is a Class 3 (b) tetrahedron, and therefore not a height function tetrahedron, a contradiction.
	
\end{proof}

\section{Families of Height Function Inducing Affine Planes}\label{SEC5}
Throughout this study, we have analyzed the behavior of dihedral angle functions on general parametrized families of tetrahedral subdivisions in relation to dihedral angle classifications. This analysis has established both the feasibility of height function subdivisions (Theorem \ref{THM2}) and the rigid geometric structures governing the families of affine planes that induce them (Theorem \ref{UNIQ}). By exploiting continuity and monotonicity properties of the associated dihedral angle functions, we have shown the existence of specific parameter values that produce height function subdivisions. However, a natural question remains: are these parameter values the only ones capable of inducing such subdivisions? Motivated by this question, we establish one final result that, when unified with the prior Theorems, provides a full classification of height function subdivisions and the affine plane families to which they belong.

\begin{thm}
	Let $T$ be a tetrahedron with $e,e'\in E(T)$ opposite edges that have obtuse dihedral angles. Then there exists non-trivial intervals $I\subset \operatorname{relint}(e)$ and $I'\subset \operatorname{relint}(e')$ such that the affine plane $\Pi_p$
	induces a height function tetrahedral subdivision if and only if $p\in I$ or $p\in I'$ respectively.
	
\end{thm}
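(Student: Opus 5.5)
We prove the statement for the family $\{\Pi_p\}_{p\in e'}$ of planes containing $e$; the family $\{\Pi_p\}_{p\in e}$ of planes containing $e'$ is handled identically by symmetry. Label $e=[v_2,v_3]$ and $e'=[v_0,v_1]$, so that $T_0^p=\operatorname{conv}\{v_0,v_2,v_3,p\}$ and $T_1^p=\operatorname{conv}\{v_1,v_2,v_3,p\}$. Since $e$ and $e'$ are opposite and both obtuse, Theorem \ref{T1} and the classification show that $T$ is of Class 2 (b) or Class 3 (b). The plan is to characterise exactly which $p$ make both $T_0^p$ and $T_1^p$ height functions. We will show that this set of $p$ is cut out by finitely many inequalities on the single-parameter functions of Definition \ref{ANGLEFUNCS}. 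Each such function is continuous (Lemma \ref{PROP1}) and monotone, so each inequality defines a subinterval of $\operatorname{relint}(e')$, and hence so does their intersection $I'$. It then remains to show that $I'$ is nonempty and has nonempty interior.

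\textbf{Step 1 (reduction to a dihedral condition).} By Lemma \ref{L1}, $T_i^p$ is a height function if and only if some face of $T_i^p$ has all three edges non-obtuse. The edges $[v_0,p]\subset e'$ and $[v_1,p]\subset e'$ inherit the obtuse angle of $e'$, so for $T_0^p$ the only candidate foundations avoid $[v_0,p]$. That leaves $F=\operatorname{conv}\{v_0,v_2,v_3\}$ and the shared face $\operatorname{conv}\{v_2,v_3,p\}$, and similarly for $T_1^p$. In each case the relevant edges are either original edges of $T$ with fixed angles, or one of $[v_2,v_3]$, $[v_2,p]$, $[v_3,p]$ carrying the functions $\alpha_i([v_2,v_3],p)$, $\alpha_i([v_2,p],p)$, $\alpha_i([v_3,p],p)$. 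The condition ``$T_i^p$ is a height function'' therefore becomes a finite Boolean combination of inequalities of the form $\alpha_i(\cdot,p)\le \pi/2$.

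\textbf{Step 2 (intervals).} A continuous monotone function satisfies $\{p:\alpha_i(\cdot,p)\le\pi/2\}=$ an interval, closed in $\operatorname{relint}(e')$. Using Equations \ref{ALG1} and \ref{ALG2}, we will pair the conditions: $\alpha_0([v_2,v_3],p)\le \pi/2$ defines an interval ending at $v_1$ (since $\alpha_0$ on $e$ increases), while $\alpha_1([v_2,v_3],p)\le\pi/2$ defines an interval starting at $v_0$. Their intersection is $[s_0,s_1]$, where $\alpha_0([v_2,v_3],s_0)=\alpha_T(e)-\pi/2$ and $\alpha_0([v_2,v_3],s_1)=\pi/2$. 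This interval is nondegenerate because $\alpha_T(e)>\pi/2$ and $\alpha_0$ is strictly increasing, which is the strict monotonicity proved in the monotonicity lemma. In Class 2 (b), the edges $[v_0,v_2]$, $[v_0,v_3]$, $[v_1,v_2]$, $[v_1,v_3]$ are non-obtuse, so the faces $F$ and $K=\operatorname{conv}\{v_1,v_2,v_3\}$ work exactly on $[s_0,s_1]$, exactly as in the proof of Theorem \ref{THM2}. We then argue that on the complement the shared-face option also fails. Outside $[s_0,s_1]$ one of $\alpha_i([v_2,v_3],p)$ is obtuse, and this kills both candidate faces of that $T_i^p$, since both contain $[v_2,v_3]$. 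So in Class 2 (b), $I'=[s_0,s_1]$ (or its intersection with $\operatorname{relint}(e')$).

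\textbf{Step 3 (Class 3 (b), the main obstacle).} In Class 3 (b), one edge among $[v_0,v_2],[v_0,v_3],[v_1,v_2],[v_1,v_3]$, say $[v_1,v_2]$, is also obtuse, so for $T_1^p$ the face $K$ is unavailable. The only candidate foundation of $T_1^p$ is then $\operatorname{conv}\{v_2,v_3,p\}$. This requires $\alpha_1([v_2,v_3],p)$, $\alpha_1([v_2,p],p)$ and $\alpha_1([v_3,p],p)$ all to be non-obtuse. The proof of Theorem \ref{THM2} already shows $\alpha_1([v_2,p],p)<\pi/2$ everywhere. The condition $\alpha_1([v_3,p],p)\le\pi/2$ defines an interval $[r,v_1)$ by monotonicity and the endpoint values. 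Combining these with the conditions for $T_0^p$ gives $I'$ as an intersection of intervals, and the difficulty is showing it has nonempty interior. For this I would exploit the counting argument from Theorem \ref{THM2}. At the point $r$ the relevant inequalities are strict: the Theorem \ref{T1} argument forces the remaining dihedral angles to be acute, and these are not merely non-obtuse. By continuity, all strict inequalities persist on a neighbourhood of $r$, so the defining conditions other than the $[v_3,p]$ one hold on a whole one-sided neighbourhood of $r$, giving a nontrivial interval. The delicate point, and where I expect the most work, is to check that this Theorem \ref{T1} counting argument yields strictness in an open set, and not just at $r$ itself.
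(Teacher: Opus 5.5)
Your proposal is correct in substance and is essentially the paper's proof reorganised. It uses the same Class 2 (b)/Class 3 (b) split, the same candidate foundations ($F$ and $K$, resp.\ $F$ and $\operatorname{conv}\{v_2,v_3,p\}$), and the same Theorem \ref{T1} count at the point $r$ where $\alpha_1([v_3,r],r)=\pi/2$ to get $\alpha_0([v_2,v_3],r),\alpha_1([v_2,v_3],r)<\pi/2$; your ``exact solution set is an intersection of sublevel intervals'' framing simply replaces the paper's explicit construction of $[t,s]$ with separate inside/outside checks, and the ``delicate point'' you flag is immediate from continuity (or, as the paper does it, from monotonicity of $\alpha_i([v_2,v_3],\cdot)$).

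Do fix your orientations, though. $\{\alpha_0([v_2,v_3],p)\le\pi/2\}$ is the interval starting at $v_0$ and $\{\alpha_1([v_2,v_3],p)\le\pi/2\}$ the one ending at $v_1$, and in Class 3 (b) $\{\alpha_1([v_3,p],p)\le\pi/2\}=(v_0,r]$ rather than $[r,v_1)$, so the interval there is $[s_0,r]$. Your side-agnostic neighbourhood argument is unaffected by this.
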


\begin{proof}
	It suffices to prove the existence of only one of these intervals as we can simply relabel vertices and applying the same argument to the opposite edge. The proof will follow the same format of proving the statement for  Class 2 (b) and Class 3 (b) tetrahedra independently.
	
	\bigskip
	
	\noindent\textbf{Class 2 (b):} First, assume $T$ is a Class 2 Sublcass (b) tetrahedron. Without loss of generality we can assume that $[v_0,v_1]$ and $[v_2,v_3]$ are the edges where obtuse dihedral angles occur. First, note that in the proof of Theorem \ref{THM2} for Class 2 Sublcass (b) tetrahedra, we proved the existence of a point $s\in \operatorname{relint}([v_0,v_1])$ that satisfied:
	\[
	\alpha_0([v_2,v_3],s) = \frac{\pi}{2} \quad \text{ and } \quad \alpha_1([v_2,v_3],s) < \frac{\pi}{2}
	\]
	An identical argument can be used to produce a point $t\in \operatorname{relint}([v_0,v_1])$ that satisfies:
	\[
	\alpha_0([v_2,v_3],t) < \frac{\pi}{2} \quad \text{ and } \quad \alpha_1([v_2,v_3],t) = \frac{\pi}{2}
	\]
	Since $\alpha_0([v_2,v_3],p)$ is increasing on $\operatorname{relint}([v_0,v_1])$ and $\alpha_0([v_2,v_3],t)<\alpha_0([v_2,v_3],s)$ we know that $t<s$. Therefore $[t,s]$ defines a non-trivial interval contained within $\operatorname{relint}([v_0,v_1])$. We claim that $[t,s]$ is our desired interval.
	
	We first show that any point in $[t,s]$ produces a height function tetrahedral subdivision. Let $k\in \operatorname{relint}([t,s])$ and consider the following faces,
	\[
	F = \operatorname{conv}\{v_0,v_2,v_3\}\subset T_0^k \quad \text{ and } \quad K = \operatorname{conv}\{v_1,v_2,v_3\}\subset T_1^k
	\]
	By the increasing and decreasing nature of these functions,
	\[
	\alpha_0([v_2,v_3],k) < \alpha_0([v_2,v_3],s) = \frac{\pi}{2} \quad \text{ and } \quad \alpha_1([v_2,v_3],k) < \alpha_1([v_2,v_3],t) = \frac{\pi}{2}
	\]
	Moreover, the dihedral angles of the remaining edges of $F$ and $K$ are unchanged by the decomposition. Thus,
	\[
	\alpha_0([v_0,v_3],k), \alpha_0([v_0,v_2],k) \le \frac{\pi}{2},\quad  \alpha_1([v_1,v_2],k), \alpha_1([v_1,v_3],k) \le \frac{\pi}{2}
	\]
	Therefore, by Lemma \ref{L1}, $F$ and $K$ are both foundations, hence $T_0^k$ and $T_1^k$ are height functions.
	
	Next, we show that any point outside this interval does not induce a height function tetrahedral subdivision. Suppose $ k\in [v_0,t)\cup (s,v_1]$. Then either $k \in [v_0,t)$ or $k \in (s,v_1]$. For simplicity, we assume $k \in [v_0,t)$, as the other case follows a symmetrically identical argument. Then,
	\begin{equation*}   
		\alpha_1([v_2,v_3],k) > \alpha_1([v_2,v_3],t) = \frac{\pi}{2}
	\end{equation*} 
	Moreover, since $[v_0,v_1]$ has an obtuse dihedral angle in $T$ we know that $\alpha_1([k,v_1],k) > \frac{\pi}{2}$. Hence $[v_2,v_3],[k,v_1] \in E(T_1^k)$ are opposite edges with obtuse dihedral angles, meaning that $T_1^k$ is a Class 2 (b) tetrahedron and thus not a height function. Therefore, any $k \notin [t,s]$ fails to induces a height function tetrahedral subdivision of $T$. This shows $[t,s]\subset \operatorname{relint}([v_0,v_1])$ is our desired interval.
	
	\bigskip
	
	\noindent\textbf{Class 3 (b):} Suppose $T$ is a Class 3 (b) tetrahedron. Without loss of generality we assume that $[v_0,v_1], [v_1,v_2]$ and $[v_2,v_3]$ all have obtuse dihedral angles. First we prove some useful results:
	
	\begin{enumerate}
		\item $\alpha_1([v_2,p],p) < \frac{\pi}{2}$ for all $p\in[v_0,v_1]$
		\item $\alpha_1([v_3,p],p)$ is increasing on $[v_0,v_1]$

	\end{enumerate}
	We already showed (1) to be true in the proof of Theorem \ref{THM2}.  (2) follows from limit behavior. Indeed, using the fact that $T_1^{v_0} =T_0^{v_1}=T$ and the complementary angle relation alongside the left hand limit we get,
	\[
	\alpha_1([v_3,v_0],v_0)=\alpha_T(v_3,v_0) < \frac{\pi}{2}
	\] 
	and,
	\begin{align*}
		\lim_{p\to v_1^-}\alpha_1([v_3,p],p) &= \lim_{p\to v_1^-} (\pi- \alpha_{0}([v_3,p], p))\\ &= \pi - \alpha_T(v_3,v_1) > \frac{\pi}{2}
	\end{align*}
	Monotonicity on $\operatorname{relint}([v_0,v_1])$ then implies that $\alpha_1([v_3,p],p)$ is increasing.
	
	In the proof of Theorem \ref{THM2} we proved for all Class 3 (b) tetrahedra  with $[v_0,v_1], [v_1,v_2]$ and $[v_2,v_3]$ having obtuse dihedral edges, there exists some $s\in \operatorname{relint}([v_0,v_1])$ such that,
	\[
	\alpha_0([v_3,s],s)=\alpha_1([v_3,s],s) = \frac{\pi}{2}, \quad \alpha_1([v_2,v_3],s) < \frac{\pi}{2}
	\]
	We will use this $s$ as one of the endpoints for our desired interval. Next, we produce a $t\in \operatorname{relint}([v_0,v_1])$ distinct from $s$ that will act as a seperate endpoint. Because $T_1^{v_0} = T$,
	\[
	\alpha_1([v_2,v_3],v_0) = \alpha_T([v_2,v_3]) > \frac{\pi}{2}
	\]
	and
	\begin{align*}
		\lim_{p \to v_1^-}\alpha_1([v_2,v_3],p) &= \lim_{p \to v_1^-}(\pi - \alpha_0([v_2,v_3],p) \\
		&= \pi - \alpha_T([v_2,v_3]) < \frac{\pi}{2}
	\end{align*}
	Then, we may use the Intermediate Value Theorem to produce some $t\in\operatorname{relint}([v_0,v_1])$ with $\alpha_1([v_2,v_3],t) = \frac{\pi}{2}$.
	Since $\alpha_{1}([v_2,v_3],p)$ is decreasing and $\alpha_1([v_2,v_3],t) = \frac{\pi}{2} > \alpha_{1}([v_2,v_3],s)$ it follows that $t<s$ and $[t,s]$ is a well defined interval in $\operatorname{relint}([v_0,v_1])$. 
	
	Now we aim to show that any $k\in [t,s]$ induces a height function tetrahedral subdivision. Indeed, we claim that $F=\operatorname{conv}\{v_0,v_2,v_3\}\subset T_0^k$ and $K=\operatorname{conv}\{v_2,v_3,k\}\subset T_1^k$ are both foundation faces whenever $k\in [t,s]$. Because $\alpha_{0}([v_2,v_3],p)$ is increasing and $\alpha_{1}([v_2,v_3],p)$ is decreasing,
	\[
	\alpha_{0}([v_2,v_3],k) < \alpha_{0}([v_2,v_3],s) < \frac{\pi}{2} \quad \text{ and } \quad \alpha_{1}([v_2,v_3],k) < \alpha_{1}([v_2,v_3],t) = \frac{\pi}{2}
	\]
	The dihedral angles of $[v_0,v_2]$ and $[v_0,v_3]$ are unchanged by the decomposition, so
	\[
	\alpha_{0}([v_0,v_2],k), \alpha_0([v_0,v_3],k) \le \frac{\pi}{2}
	\]
	Lastly, because $\alpha_{1}([v_2,p],p) < \frac{\pi}{2}$ for all $p\in [v_0,v_1]$ and because $\alpha_{1}([v_3,p],p)$ is increasing we have,
	\[
	\alpha_{1}([v_2,k],k) < \frac{\pi}{2} \quad \text{ and } \quad \alpha_{1}([v_3,k],k) < \alpha_{1}([v_3,s],s) = \frac{\pi}{2}
	\]
	Therefore, by Lemma \ref{L1}, $F\subset T_0^k$ and $K\subset T_1^k$ are foundations, hence $T_0^k$ and $T_1^k$ are height functions.
	
	Finally, we show that any $k\in [v_0,t)\cup (s,v_1]$ does not yield a height function decomposition. First, suppose $k\in [v_0,t)$. Since $\alpha_{1}([v_2,v_3],p)$ is decreasing,
	\[
	\alpha_{1}([v_2,v_3],k) > \alpha_{1}([v_2,v_3],t) = \frac{\pi}{2}
	\]
	Furthermore, $[v_1,k]\subset T_1^k$ and $[v_1,v_2]\subset T_1^k$ inherit obtuse dihedral angles from $T$. This shows that $T_1^k$ is of Class 3 (b) and not a height function.
	
	Similarly, suppose $k\in (s,v_1]$, then because $\alpha_{1}([v_3,p],p)$ is increasing, 
	\[
	\alpha_{1}([v_3,k],k) > \alpha_{1}([v_3,s],s) = \frac{\pi}{2}
	\]
	Moreover, $[v_1,k]\subset T_1^k$ and $[v_1,v_2]\subset T_1^k$ inherit obtuse dihedral angles from $T$. Hence, $T_1^k$ is of Class 3 (b) and not a height function. This proves that $[t,s]$ is our desired interval.
\end{proof}

The culmination of Lemma \ref{TS}, Theorem 5.1 and Theorem 5.2 allows us to completely classify all possible height function tetrahedral subdivisions for Class 2 (b) and Class 3 (b) tetrahedra. To put it simply, the bisecting plane must contain an edge with an obtuse dihedral angle and the opposite edge the plane intersects must also have an obtuse dihedral angle. Moreover, the plane must intersect this opposite edge at exactly one point within a non-trivial interval.

\section{Conclusion and Conjecture}
In Section \ref{PRELIM}, we established the notion of a tetrahedral subdivision and for the remainder of the paper, we restricted our focus to planar bisections of this form. The driving reason behind this specification was ultimately to simplify our task by avoiding complex polyhedra. Although this restriction does not capture the theory of height functions in full generality, for the context of this paper, no significant sacrifices are made by narrowing our scope. In theory, decomposing more complex polyhedra will likely require a broader class of subdivisions. However, in the case of tetrahedra, we propose the following conjecture without proof. 

\bigskip

	\noindent\textbf{Conjecture.} \textit{Let $T$ be a tetrahedron of Class 2 (b) or Class 3 (b), and let the plane $\Pi$ induce a planar bisection $T = P_0\cup P_1$, where $P_0$ and $P_1$ are both height function polyhedra. Then this planar bisection must be a tetrahedral subdivision. }
	
\section*{Acknowledgments}
The problem for this paper was initially proposed by Dr. Maria Trnkova, and was made possible through the support of both Dr. Trnkova and the Davis Math Lab.

\end{document}